\newcommand{\R}{\mathbb{R}}
\newcommand{\Z}{\mathbb{Z}}
\newcommand{\Prob}{\mathbb{P}}
\newcommand{\E}{\mathbb{E}}
\newcommand{\1}{\mathbbm{1}}
\newtheorem{cor}{Corollary}
\newtheorem{lem}{Lemma}
\newtheorem{thm}{Theorem}
\newtheorem{clm}{Claim}
\newtheorem{prop}{Proposition}
\theoremstyle{definition}
\newtheorem{rmk}{Remark}
\begin{document}
\begin{center}
{\Large\textbf{Superdiffusive and Subdiffusive Exceptional Times in The Dynamical Discrete Web}} \vspace{2mm}\\
\text{Accepted for publication in Stoch. Proc.
Applic.}\vspace{4mm}\\
{\large\textbf{Dan Jenkins }}\\
\text{Courant Institute of Mathematical Sciences, NYU, New York, NY 10012}
\end{center}

\begin{abstract}
The dynamical discrete web (DyDW) is a system of one-dimensional coalescing random walks that evolves in an extra dynamical time parameter, $\tau$.  At any deterministic $\tau$ the paths behave as coalescing simple symmetric random walks.  It has been shown in \cite{OP} that there exist exceptional dynamical times, $\tau$, at which the path from the origin, $S_0^{\tau}$, is $K$-subdiffusive, meaning $S_0^{\tau}(t) \leq j+K\sqrt{t}$ for all $t$, where $t$ is the random walk time, and $j$ is some constant.  In this paper we consider for the first time the existence of superdiffusive exceptional times.  To be specific, we consider $\tau$ such that $\limsup_{t \to \infty} S_0^{\tau}(t)/\sqrt{t\log(t)} \geq C$.  We show that such exceptional times exist for small values of $C$, but they do not exist for large $C$.  The other goal of this paper is to establish the existence of exceptional times for which the path from the origin is $K$-subdiffusive in both directions, i.e. $\tau$ such that $|S_0^{\tau}(t)| \leq j+K\sqrt{t}$ for all $t$.  We also obtain upper and lower bounds for the Hausdorff dimensions of these two-sided subdiffusive exceptional times.  For the superdiffusive exceptional times we are able to get a lower bound on Hausdorff dimension but not an upper bound.

\end{abstract}

\begin{section}{Introduction}$\quad$  
This paper examines the dynamical discrete web (DyDW), a system of coalescing random walks that evolves in a continuous dynamical time parameter.  The dynamical discrete web was introduced by Howitt and Warren in \cite{OP1}.  The DyDW and related systems have been considered as models for erosion and other hydrological phenomena (see \cite{OP5},\cite{OP6}).  We examine ``exceptional times" for the DyDW.  These are dynamical times at which paths from the DyDW display behavior that would have probability zero for a standard random walk, or for the DyDW observed at a deterministic time.


\begin{figure}[b!!!]
\begin{center}
 \includegraphics[scale=0.5]{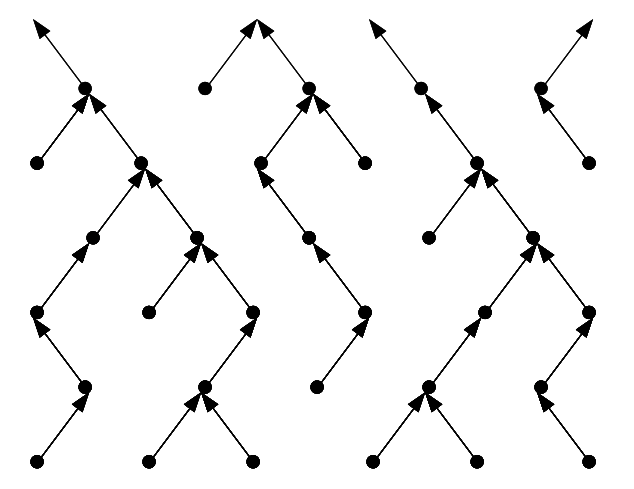}
\end{center}
 \caption{A partial realization of the discrete web.  Each arrow independently points left or right with probability $1/2$.  In the dynamical discrete web, each arrow has an independent Poisson clock and resets whenever it rings.}
 \label{fig: 1}
\end{figure}

First we define the dynamical discrete web, and briefly describe our main results.  This paper follows \cite{OP} closely; for a more thorough introduction to the subject see Section 1 of that paper. 

To discuss the DyDW, we first define the discrete web (DW).  The discrete web is a system of coalescing one-dimensional simple symmetric random walks.  To construct it, we independently assign to each point in $\Z^2_{\text{even}}:=\{(x,t)\in \Z^2 : \; x+t \text{ is even} \}$ a symmetric, $\pm 1$-valued Bernoulli random variable, $\xi_{(x,t)}$.  We then draw an arrow from $(x,t)$ to $(x+\xi_{(x,t)}, t+1)$ (see Figure \ref{fig: 1}).  For each $(x,t) \in \Z^2_{\text{even}}$, we let $S_{(x,t)}(\cdot)$ be the path that starts at $(x,t)$ and follows the arrows from there.  The discrete web is the collection of all such paths for $(x,t) \in \Z^2_{\text{even}}$.  As the figures and the ordering of $(x,t)$ suggest, we let the path time coordinate, $t$, run vertically, and the space coordinate, $x$, run horizontally.  Future references to left/right or up/down in $\Z^2_{\text{even}}$ should be understood according to this convention.

The DyDW was first introduced by Howitt and Warren in \cite{OP1}.  It is a discrete web that evolves in an extra dynamical time parameter, $\tau$, by letting the arrows independently switch directions as $\tau$ increases.  To accomplish this, we assign to each $(x,t) \in \Z^2_{\text{even}}$ an independent, rate one Poisson clock.  When the clock at $(x,t)$ rings, we reset the arrow at $(x,t)$ by replacing it with a new, independent arrow (which may or may not agree with the previous arrow).  Note that this gives the same distribution as if we had forced the arrows to switch at half the rate.  These dynamics correspond to replacing the $\xi_{(x,t)}$ from the DW with right-continuous $\tau$-varying versions, $\xi_{(x,t)}^{\tau}$.  We then let $\mathcal{W}(\tau)$ denote the discrete web constructed from the $\xi_{(x,t)}^{\tau}$'s, and let $S_{(x,t)}^{\tau}(\cdot)$ denote the path from $\mathcal{W}(\tau)$ starting at $(x,t)$.  Note that at any deterministic $\tau$, $\mathcal{W}(\tau)$ is distributed as a discrete web, so all paths in $\mathcal{W}(\tau)$ behave as simple symmetric random walks.

An ``exceptional time'' refers to a random dynamical time at which the DyDW behaves in a way that would have probability zero for the DW.  The study of exceptional times for the DyDW has been motivated by earlier work on dynamical percolation, see \cite{OP2}, \cite{OP3}.  Similarly to the DyDW, dynamical percolation consists of a lattice of Bernoulli random variables which reset according to independent Poisson processes.  For static (non-dynamical) percolation with critical edge probabilities it is believed that no infinite cluster should exist.  This is proven for dimension two and large dimensions (see \cite{OP11}, for example).  In \cite{OP3} it was shown that critical two-dimensional dynamical percolation has exceptional times where this fails, i.e. where an infinite cluster exists.  However, no such exceptional times exist for large dimensions, see \cite{OP2}.

Exceptional times for the DyDW were first studied by Fontes, Newman, Ravishankar and Schertzer in \cite{OP}.  They use techniques similar to those used for dynamical percolation to show that there exist exceptional times for the DyDW.  Their paper shows the existence of $\tau$ at which the path from the origin ($S_0^{\tau}$) is subdiffusive in one direction, growing slower than allowed by the classical law of the iterated logarithm.  To be specific, they show that for sufficiently large $K,j$:
\vspace*{-1mm}\begin{align} \label{sub+}
\Prob\left(\exists \tau \in [0,1] \text{ s.t. } S_0^{\tau}(t) \leq j+K\sqrt{t} \text{   for all }t\geq 0\right)>0.
\vspace*{-3mm}
\end{align}


In this paper we will carry out a similar analysis of the following related question: do there also exist exceptional times at which the path from the origin grows \textit{faster} than allowed by the law of iterated logarithm?  In this case we say $S_0^{\tau}$ is superdiffusive and call $\tau$ a superdiffusive exceptional time.  The question of the existence of such superdiffusive exceptional times has not been studied previously.  We will show that such exceptional times do in fact exist, and give a bound on how large such superdiffusive paths can get (see Theorems \ref{thm2} and \ref{thm3}).  We are also able to extend the subdiffusive results from \cite{OP}, showing the existence of exceptional times at which $S_0^{\tau}$ is subdiffusive in both directions, meaning $|S_0^{\tau}(t)| \leq j+K\sqrt{t}$ for all $t$.  


Now we will state our main results in the order in which they will appear.  The subdiffusive results will be presented first.  Sections \ref{struct} to \ref{thm1prf} are devoted to the proof of:
\begin{thm} \label{thm1} 
 For $K,j$ sufficiently large:
 \begin{align} \label{thm1eq}
\Prob\left(\exists \tau \in [0,1] \text{ s.t. } |S_0^{\tau}(t)| \leq j+K\sqrt{t} \text{   for all }t\geq 0\right)>0.
\end{align}
\end{thm}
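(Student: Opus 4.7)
The approach is a second moment method in the spirit of the proof of \eqref{sub+} in \cite{OP}, adapted to control both sides of the parabolic tube simultaneously. For each $N \geq 1$ define the truncated event
\[
A_N(\tau) := \left\{|S_0^{\tau}(t)| \leq j + K\sqrt{t} \text{ for all } 0 \leq t \leq N\right\},
\]
and let $Y_N := \int_0^1 \1_{A_N(\tau)}\,d\tau$. The goal is to show that for $K,j$ sufficiently large, $\E[Y_N^2] \leq C\,(\E[Y_N])^2$ uniformly in $N$. By Paley--Zygmund this yields $\Prob(Y_N > 0) \geq 1/C > 0$ for every $N$; since the random sets $\{\tau \in [0,1]: A_N(\tau) \text{ holds}\}$ are nested in $N$ and may be taken to be closed, a standard compactness argument produces a $\tau$ in their intersection, which is precisely the event in \eqref{thm1eq}.

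For the first moment, at deterministic $\tau$ the path $S_0^\tau$ is an ordinary simple symmetric random walk, so $\E[Y_N] = \Prob(A_N(0))$. Classical Brownian comparison and a heat-kernel/eigenvalue estimate for the parabolic tube $|x|\leq K$ under diffusive scaling give $\Prob(A_N(0)) \asymp N^{-\gamma(K)}$, where $\gamma(K) \to 0$ as $K \to \infty$. For the second moment, writing $\delta = |\tau_1 - \tau_2|$, I use the trivial bound $\Prob(A_N(\tau_1) \cap A_N(\tau_2)) \leq \Prob(A_N(0))$ for $\delta \leq 1/N$ and, for $\delta > 1/N$, a two-time correlation estimate of the form
\[
\Prob(A_N(\tau_1) \cap A_N(\tau_2)) \leq C\, \Prob(A_N(0))^2 \cdot \delta^{-\gamma(K)}.
\]
Combining these, $\E[Y_N^2] \leq C\, \Prob(A_N(0))^2 \int_0^1 \delta^{-\gamma(K)} d\delta$, which is finite provided $\gamma(K) < 1$, i.e. provided $K$ is sufficiently large.

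The main obstacle is proving the displayed two-time estimate in the two-sided setting. The heuristic is that at dynamical times $\tau_1, \tau_2$ separated by $\delta$, an arrow has been resampled by time $\tau_2$ with probability $\sim 1 - e^{-\delta}$. Up to random walk time $T(\delta) \asymp 1/\delta$, the expected number of resampled arrows seen along either path is $O(1)$, so $S_0^{\tau_1}$ and $S_0^{\tau_2}$ essentially coincide and the joint probability of both remaining in the tube up to time $T$ is on the order of $\Prob(A_{1/\delta}(0)) \asymp \delta^{\gamma(K)}$. Beyond time $T(\delta)$ the two paths encounter mostly disjoint pieces of spacetime and may be treated as approximately independent, each with conditional tube-survival probability $\sim \Prob(A_N(0))/\Prob(A_{1/\delta}(0))$; multiplying reproduces the target bound. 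The subtlety relative to \cite{OP} is that the one-sided event enjoys a monotonicity under arrow flips which the two-sided event lacks, so the decoupling after time $T(\delta)$ cannot be handled by monotone domination. Instead I expect to condition on the joint positions $(S_0^{\tau_1}(T), S_0^{\tau_2}(T))$ and exploit the independence of the Poisson update process on the disjoint spacetime strips explored by the two paths, together with a Markov-type reasoning on the post-$T$ increments. Once this two-time bound is in hand, the remainder of the argument is a routine second moment calculation and compactness limit.
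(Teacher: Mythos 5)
Your overall second-moment skeleton is sound and is the same engine the paper uses, but the step you defer --- the two-time estimate $\Prob(A_N(\tau_1)\cap A_N(\tau_2)) \leq C\,\Prob(A_N(0))^2\,\delta^{-\gamma(K)}$ --- is the entire content of the theorem, and the mechanism you propose for it rests on a false picture. You argue that beyond random-walk time $T(\delta)\asymp 1/\delta$ the paths $S_0^{\tau_1}$ and $S_0^{\tau_2}$ ``encounter mostly disjoint pieces of spacetime'' and are approximately independent. They do not: on the very event you are estimating, both paths are confined to the same parabolic tube around the $t$-axis, so they meet each other again and again at all scales, and at every meeting at a site whose clock has not rung in $[\tau_1,\tau_2]$ (probability $e^{-\delta}$, close to $1$) they follow the \emph{same} arrow and ``stick.'' The dependence therefore never switches off; what saves the argument is not spatial disjointness but the quantitative fact that the \emph{fraction} of time spent sticking up to time $t$ is small (of order $1/(\sqrt{t}\,\delta)$ after diffusive scaling), together with a stability statement: the tube/rectangle event is insensitive to deleting the sticking stretches, which requires a modulus-of-continuity estimate for the rescaled walk (Garsia--Rodemich--Rumsey type) applied to the time-changed decomposition of each path into its sticking and non-sticking parts. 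This is exactly what Lemmas 2--4 and Proposition 1 of the paper (adapting Lemmas 3.3--3.5 and Proposition 3.1 of \cite{OP}) supply, and nothing in your sketch produces it; ``independence of the Poisson update process on disjoint strips'' plus ``Markov-type reasoning on post-$T$ increments'' cannot, because the relevant spacetime regions are not disjoint.

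Two further points. First, your single-path, single-global-event formulation also requires a quasi-multiplicativity input you have not justified: you use conditional survival probabilities of the form $\Prob(A_N(0))/\Prob(A_{1/\delta}(0))$ after conditioning on the positions at time $T$, but the survival probability from a point near the edge of the tube is not comparable to that from the center without a separate argument. The paper sidesteps this by a multi-scale construction: a stack of diffusively growing rectangles, with the event at scale $k$ defined through the paths started at the top corners of the previous rectangle, so that the events at different scales are genuinely independent at fixed $\tau$ (this is where the non-crossing property traps $S_0^{\tau}$, and where a genuine two-sided subtlety appears --- the left and right corner paths can coalesce, which destroys the decorrelation argument; the paper must introduce an auxiliary independent web $\hat{\mathcal{W}}$ to build non-coalescing substitutes). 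Your formulation avoids that particular coalescence issue but buys the unproven quasi-multiplicativity instead. Second, your remark that the sets $\{\tau: A_N(\tau)\}$ ``may be taken to be closed'' is not correct as stated --- they are unions of half-open intervals because of the right-continuity of the dynamics --- though this is a minor, repairable point: the paper closes the gap by observing that endpoints are clock-ring times, there are countably many of them, and resetting a single arrow at such a $\tau$ cannot create a probability-zero event.
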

\noindent An immediate consequence of this is:
\begin{cor} \label{cor1}
 For $K$ sufficiently large:
 \begin{align} \label{cor1eq}
\mathbb{P}\left(\exists \tau \in [0,1] \text{ s.t. } \limsup_{t \to \infty} \frac{|S_0^{\tau}(t)|}{\sqrt{t}} \leq K\right) = 1.
\end{align}
\end{cor}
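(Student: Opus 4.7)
The plan is straightforward: derive Corollary~\ref{cor1} directly from Theorem~\ref{thm1} by a pointwise set-containment argument. Fix $K$ large enough that Theorem~\ref{thm1} applies for some constant $j$, and let $A_{j,K}$ denote the event that there exists $\tau\in[0,1]$ with $|S_0^{\tau}(t)|\leq j+K\sqrt{t}$ for every $t\geq 0$. On $A_{j,K}$, take any witnessing $\tau$; dividing by $\sqrt{t}$ gives $|S_0^{\tau}(t)|/\sqrt{t}\leq K+j/\sqrt{t}$ for each $t>0$, and sending $t\to\infty$ yields $\limsup_{t\to\infty}|S_0^{\tau}(t)|/\sqrt{t}\leq K$. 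The same $\tau$ therefore witnesses the event appearing in Corollary~\ref{cor1}, so $A_{j,K}$ is contained in that event.

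Monotonicity of probability combined with the positivity guaranteed by Theorem~\ref{thm1} then gives \eqref{cor1eq}. There is essentially no obstacle once Theorem~\ref{thm1} is assumed, since the additive constant $j$ is asymptotically negligible against $K\sqrt{t}$; the value of $K$ in the corollary may be taken to be exactly the $K$ supplied by the theorem. The only minor point one might address is measurability of the $\limsup$ event in \eqref{cor1eq}, which is routine given the piecewise-constant nature of $\tau\mapsto S_0^{\tau}$ on any bounded time window (the underlying Poisson clocks are locally finite), and in any case follows from the measurability considerations the authors will have already addressed in proving Theorem~\ref{thm1}.
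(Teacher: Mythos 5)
Your argument is correct and is exactly the reasoning the paper intends: the event in Theorem~\ref{thm1} is contained in the event of Corollary~\ref{cor1} since $j/\sqrt{t}\to 0$, and the paper itself presents the corollary as an immediate consequence of the theorem with no further argument.
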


Our study of superdiffusive exceptional times begins in Section \ref{st2}, where we prove:

\begin{thm} \label{thm2} 
 For $C>0$ sufficiently small:
 \begin{align} \label{thm2eq}
\Prob \left(\exists \tau \in [0,1] \text{ s.t. } \limsup_{t \to \infty} \frac{S_0^{\tau}(t)}{\sqrt{t \log(t)}} \geq C\right)=1.
\end{align}
\end{thm}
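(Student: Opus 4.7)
The plan is to adapt the second-moment / Paley--Zygmund scheme of \cite{OP} (developed there for subdiffusive exceptional times) to a superdiffusive event at each dyadic scale. Set $t_n = 2^n$ and
\[
A_n(\tau) := \{S_0^\tau(t_n) \geq C\sqrt{t_n \log t_n}\}, \qquad \mathcal{T}_n := \{\tau \in [0,1] : A_n(\tau)\}.
\]
Since only finitely many arrows influence $S_0^\tau(t_n)$ and each has a rate-$1$ Poisson clock, $\tau \mapsto S_0^\tau(t_n)$ is piecewise constant, so $\mathcal{T}_n$ is a random finite union of intervals. The event in \eqref{thm2eq} is implied by the assertion that some $\tau \in [0,1]$ lies in $\mathcal{T}_n$ for infinitely many $n$, so it suffices to prove this occurs with positive probability and then invoke a $0$-$1$ law.

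For the moment estimates, the local CLT gives
\[
p_n := \Prob(A_n(\tau)) \asymp t_n^{-C^2/2} / \sqrt{\log t_n},
\]
hence $\E |\mathcal{T}_n| = p_n$. For the second moment I would use that the coupled pair $(S_0^{\tau_1}, S_0^{\tau_2})$ follows identical arrows until a ``split time,'' geometric with parameter comparable to $|\tau_1 - \tau_2|$, after which the walks evolve independently from a common location. A direct Gaussian computation then yields
\[
\Prob(A_n(\tau_1) \cap A_n(\tau_2)) \leq C\, p_n^2 + p_n \cdot \1\{|\tau_1 - \tau_2| \leq 1/t_n\},
\]
so $\E |\mathcal{T}_n|^2 \leq C(p_n^2 + p_n / t_n) \leq C' p_n^2$ as long as $p_n\, t_n \to \infty$, i.e.\ $C < \sqrt{2}$. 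Paley--Zygmund then gives $\Prob(\mathcal{T}_n \neq \emptyset) \geq c > 0$ uniformly in $n$.

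The crucial---and hardest---step is passing from ``each $\mathcal{T}_n$ is nonempty with positive probability'' to ``some $\tau$ lies in $\mathcal{T}_n$ for infinitely many $n$.'' Unlike the subdiffusive setting of \cite{OP}, where the analogous events are decreasing in $n$ and nested-compactness suffices, here the $\mathcal{T}_n$ are not monotone, so I would use a Cantor-type construction: fix a rapidly increasing subsequence $n_k$ and inductively build nested random intervals $I_1 \supseteq I_2 \supseteq \cdots$ with $I_k \subseteq \mathcal{T}_{n_k}$. The key point is that the arrows at time levels in $(t_{n_k}, t_{n_{k+1}}]$ are independent of everything determining $I_k$, so the same second-moment bound applies to $\mathcal{T}_{n_{k+1}} \cap I_k$ conditionally on the past, provided $n_{k+1}$ is chosen so large that $|I_k| \gg 1/t_{n_{k+1}}$. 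A conditional Paley--Zygmund then produces $I_{k+1} \subseteq \mathcal{T}_{n_{k+1}} \cap I_k$, and any $\tau \in \bigcap_k I_k$ satisfies $A_{n_k}(\tau)$ for all $k$, yielding the desired superdiffusive behavior.

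The main obstacle I foresee is quantitative control on the length of $I_{k+1}$. After showing $\mathcal{T}_{n_{k+1}} \cap I_k$ has positive measure, one must identify a connected sub-interval of length $\gg 1/t_{n_{k+2}}$ so that the next induction step can be run; since $\mathcal{T}_{n_{k+1}}$ is a union of finitely many random intervals whose endpoints are determined by Poisson firing times along the walk, this requires an anti-concentration estimate ruling out the pathological case where every component of $\mathcal{T}_{n_{k+1}} \cap I_k$ is very short, similar in spirit to gap-length estimates in \cite{OP}. Once positive probability of \eqref{thm2eq} is in hand, a $0$-$1$ law---based either on ergodicity of the arrow configuration under spatial translations together with an appropriate reformulation of the event, or on a tail argument in the dynamical time variable---upgrades the probability to $1$.
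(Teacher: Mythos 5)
Your overall scheme --- a second-moment (Paley--Zygmund) estimate in $\tau$ at each scale, followed by a multi-scale nested-interval construction --- is the same philosophy as the paper's proof, but there is a structural gap in how you set up the scale-$n$ events, and it is exactly the point the paper's construction is designed to avoid. You define $A_n(\tau)$ through $S_0^\tau(t_n)$, the walk from the origin, so $A_{n_{k+1}}(\tau)$ depends on \emph{all} arrows at levels $0,\dots,t_{n_{k+1}}$, including the ones that determined $I_k$; your claim that ``the arrows at time levels in $(t_{n_k},t_{n_{k+1}}]$ are independent of everything determining $I_k$'' is true but does not make $\mathcal{T}_{n_{k+1}}$ conditionally independent of the past, because the event itself is not measurable with respect to those arrows alone. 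Passing to increments does not repair this cheaply: for different $\tau\in I_k$ the walks sit at different (random, $\tau$-dependent) positions at level $t_{n_k}$, so the two-point decorrelation estimate you need conditionally is a statement about a family of walks with $\tau$-dependent starting points, not the estimate you proved at a single scale. The paper's key device is to define the scale-$k$ event $\hat{A}_k^{\tau}=\{S_{\hat{l}_k}^{\tau}(t_{k+1})>w_k\}$ using the path started from the \emph{fixed deterministic} corner $\hat{l}_k=(-w_{k-1},t_k)$ of a widened rectangle of half-width $w_k\approx\alpha\sqrt{t\log t}$, and then to use non-crossing of web paths: on $\hat{A}_k^\tau$ either $S_0^\tau(t_k)<-w_{k-1}$ or $S_0^\tau(t_{k+1})>w_k$, which still forces $\limsup S_0^\tau/\sqrt{t\log t}\geq C$ or $\liminf\leq -C$ (and symmetry plus a $0$--$1$ law then gives both). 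With this choice the events at different $k$ depend on disjoint sets of arrows, hence are genuinely independent across scales, the uniform-in-$k$ bound $\Prob(\hat{E}_k\cap[a,b]\neq\emptyset)\geq\mathcal{K}_\alpha(|a-b|)>0$ (in fact $\to 1$) applies to any interval produced at the previous stage no matter how short, and the nested construction succeeds almost surely --- your worry about anti-concentration of component lengths simply does not arise.

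A secondary issue: your single-scale correlation bound $\Prob(A_n(\tau_1)\cap A_n(\tau_2))\leq Cp_n^2+p_n\1\{|\tau_1-\tau_2|\leq 1/t_n\}$ rests on a ``geometric split time, then independent'' picture which is too naive for the DyDW: after the two walks at dynamical times $\tau_1\neq\tau_2$ first follow different arrows they can meet again and stick again, and controlling the total sticking time is the content of the decorrelation estimate (Proposition 3.1 of \cite{OP}, quoted here as Lemma \ref{hdec}), which yields a polynomially decaying correction $c'\bigl(1/(d_k|\tau-\tau'|)\bigr)^{a'}$ rather than an indicator cutoff. The second-moment integral still closes, but only for $\alpha$ small relative to the decorrelation exponent, so your claimed threshold $C<\sqrt{2}$ is not supported by this method; since the theorem only asserts the result for $C$ sufficiently small, that part is a quantitative overreach rather than a fatal flaw.
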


In Section \ref{2sidesup} we sketch a proof of the two-sided analogue of this theorem:  

\begin{thm} \label{thm2sup} 
 For $C>0$ sufficiently small:
 \begin{align} \label{thm2supeq}
\Prob \left(\exists \tau \in [0,1] \text{ s.t. } \limsup_{t \to \infty} \frac{S_0^{\tau}(t)}{\sqrt{t \log(t)}} \geq C \text{ and } \liminf_{t \to \infty} \frac{S_0^{\tau}(t)}{\sqrt{t \log(t)}} \leq -C \right)=1.
\end{align}
\end{thm}

The choice of $[0,1]$ for the interval of dynamical time is arbitrary.  The events in \eqref{cor1eq}, \eqref{thm2eq} and \eqref{thm2supeq} are (almost surely equal to) tail events with respect to the arrow processes.  This means that those sets of exceptional times will be a.s. empty or a.s. dense.  To see that Theorem \ref{thm1} still holds for any other choice of interval, first note that the process is stationary in $\tau$ so all that matters is the length of the interval.  If the probability in \eqref{thm1eq} is zero for a given choice of interval, clearly it must also be zero for any shorter interval.  However, any larger interval could be covered by multiple copies of the original interval, each of which would have probability zero of containing an exceptional time.  Thus the probability in \eqref{thm1eq} is zero for one choice of interval if and only if it is zero for all non-degenerate intervals.

Theorem \ref{thm2} is in some sense optimal, in that such exceptional times do not exist for large values of $C$.  In Section \ref{st3} we will prove:

\begin{thm} \label{thm3} 
 For $C>0$ sufficiently large:
 \begin{align} \label{thm3eq}
\Prob \left(\exists \tau \in [0,1] \text{ s.t. } \limsup_{t \to \infty} \frac{S_0^{\tau}(t)}{\sqrt{t \log(t)}} \geq C \right)=0.
\end{align}
\end{thm}


In the final section of the paper we study the Hausdorff dimensions of these various sets of exceptional times.  Section \ref{tshd} is devoted to two-sided subdiffusive exceptional times.  We look at the sets:
\begin{align}
 \{ \tau \in[0,\infty) &:\exists j \text{ s.t. } |S_0^{\tau}(t)|  \leq j+K\sqrt{t} \text{   for all $t\geq 0$} \}, \label{subset}\\
 \{ \tau \in[0,\infty) &:\limsup_{t \to \infty} |S_0^{\tau}(t)/\sqrt{t}| \leq K\label{sublimset} \},
\end{align}
\noindent and derive upper and lower bounds for their Hausdorff dimensions, as functions of $K$.  Our bounds are analogous to, and motivated by, those from \cite{OP} for the one-sided case.  As in the one-sided case, the dimensions tend to $1$ as $K$ goes to $\infty$.  In other words, the set of all two-sided subdiffusive exceptional times has Hausdorff dimension equal to one.  For small $K$ it is known that \eqref{subset} is empty, see Proposition 5.8 of \cite{OP}.  This implies \eqref{sublimset} is also empty for small $K$, see Section \ref{HD}.  Our analysis of \eqref{sublimset} is helped by noting that \eqref{sublimset} only depends on arrows with arbitrarily large time coordinate (almost surely).  This means \eqref{sublimset} can be analysed using tail events, allowing us to improve the lower bound slightly relative to the methods of \cite{OP}.  The two sets \eqref{subset} and \eqref{sublimset} have the same dimensions, except for at most countably many values of $K$ (see Section \ref{HD} for details).  In Section \ref{supdim} we look at the sets of superdiffusive exceptional times:
\begin{align}
 \{ \tau \in[0,\infty) &:\limsup_{t \to \infty} S_0^{\tau}(t)/\sqrt{t\log(t)} \geq C \}.
\end{align}
For these sets we are able to get a lower bound on Hausdorff dimension, but we do not have an upper bound at this time.  As a consequence of our lower bound we see that the dimension of the superdiffusive exceptional times tends to $1$ as $C$ goes to $0$, i.e. the set of all superdiffusive exceptional times has dimension one.
\end{section}

\begin{section}{Structure of the Proof of Theorem 1}\label{struct}$\quad$ 
As in \cite{OP}, we show that subdiffusivity occurs by showing that a series of ``rectangle events'' occur.  First, we define our rectangles.  Let $\gamma>1$ and $d_k=2(\lfloor\frac{\gamma^k}{2}\rfloor+1)$.  Let $R_0$ be the rectangle with vertices $(-d_0,0)$, $(+d_0,0)$, $(-d_0,d_0^2)$ and $(+d_0,d_0^2)$.  Given $R_k$ we take $R_{k+1}$ to be the rectangle of width $2d_{k+1}$ and height $d_{k+1}^2$, that is centered about the $t$-axis, and stacked on top of $R_k$ (see Figure \ref{fig: 2}).  An easy computation shows that the entire stack of rectangles lies between the graphs of $-j-K\sqrt{t}$ and $j+K\sqrt{t}$, where $j,K$ depend on $\gamma$.  For example, we can take $j=2,K=\gamma$, see Proposition \ref{Kbound} of Section 5.  Thus if $S_0^{\tau}$ stays within the stack, it will be subdiffusive in both directions.  

\begin{figure}[b!!!]
\begin{center}
 \includegraphics[scale=0.4]{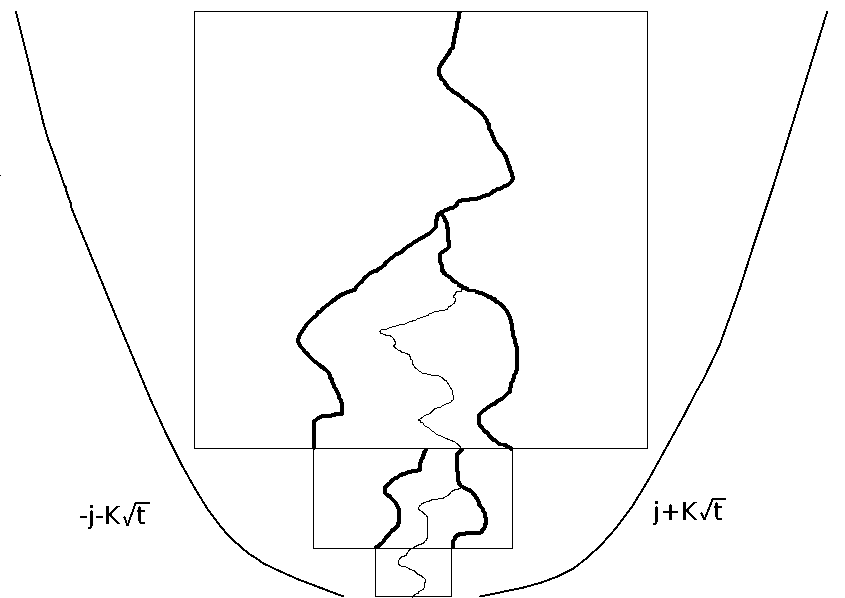}
\end{center}
 \caption{Rough sketch of the first three rectangles and paths for which the $B_k$'s occur.  The darker paths are the $S_{l_k}^{\tau}$'s and $S_{r_k}^{\tau}$'s.  The lighter path is $S_0^{\tau}$.}
 \label{fig: 2}
\end{figure}

\begin{rmk} \label{skew}
Notice that this gives a bound with left-right symmetry.  If we wish to study exceptional times where $-j_L-K_L\sqrt{t} \leq S_0^{\tau}(t) \leq j_R+K_R\sqrt{t}$, we can skew our rectangles.  This can be accomplished by horizontally scaling the left and right halves of each rectangle by $C_L$ and $C_R$, respectively (and rounding out to the nearest point in $\Z^2_{even}$).  For the sake of simplicity of our arguments (and notation) we will largely ignore the asymmetrical case.  However, it should be noted that our results easily extend to the asymmetrical case, using the above construction.
\end{rmk}

Let $t_k$ denote the time coordinate of the lower edge of $R_k$ (i.e. $t_k = d_0^2 + d_1^2 + ... + d_{k-1}^2$).  For $k\geq1$, let $l_k$ denote the upper left vertex of $R_{k-1}$ and $r_k$ the upper right vertex of $R_{k-1}$.  We would like to define our rectangle events, $B^{\tau}_k$, as:
\begin{align*}
B^{\tau}_0:=&\{|S_0^{\tau}(t)| \leq d_0 \; \; \forall t \in [0,t_1]\}, \\
B^{\tau}_k:=&\{|S_{l_k}^{\tau}(t)| \leq d_k \text{ and } |S_{r_k}^{\tau}(t)| \leq d_k \;\; \forall t \in [t_k,t_{k+1}] \} \; \; \text{for }k\geq1.
\end{align*}
Then on the event $\bigcap_{k\geq0} B^{\tau}_k$, $S_0^{\tau}$ will stay in the stack of rectangles, and thus be subdiffusive in both directions.  This follows from the discussion above, combined with the fact that paths in the discrete web do not cross. Thus if for some $\gamma$ we can show:
\begin{equation} \label{B}
 \mathbb{P}(\exists \tau \in [0,1] \text{ s.t. } \bigcap_{k\geq0} B^{\tau}_k(\gamma)\text{ occurs}) >0,
\end{equation}
then Theorem \ref{thm1} will follow immediately.

To prove \eqref{B}, we will need to understand the interaction between pairs of paths from the DyDW.  This can be described as a combination of coalescing (if the paths have the same dynamical time) and sticking (if the dynamical times differ).  Let $S^{\tau}_{z}$ be the path from $z=(x,t) \in \Z^2_{\text{even}}$ at dynamical time $\tau$, and let $S^{\tau'}_{z'}$ be the path from $z'=(x',t')$ at dynamical time $\tau'$.  The paths will evolve independently until they meet at some time $t^*\geq\text{Max}(t,t')$.  If $\tau=\tau'$, the paths coalesce when they meet, otherwise they ``stick''.  To be precise, let $x^*:=S^{\tau}_{z}(t^*)=S^{\tau'}_{z'}(t^*)$ and let $z^*=(x^*,t^*)(\in \Z^2_{\text{even}})$.  Then if the clock at $z^*$ has not rung in $(\tau,\tau']$ (WLOG assume $\tau<\tau'$), the two paths will follow the same arrow on $[t^*,t^*+1]$.  We will say the paths are sticking on $[t^*,t^*+1]$.  The paths continue to stick until they reach a site whose clock has rung, at which point they follow independent arrows.  Note that these independent arrows may agree, but this will not be considered sticking.

To prove Theorem \ref{thm1}, we would like to show \eqref{B}.  Unfortunately, we are not able to prove \eqref{B} directly.  The problem arises in the interaction between sticking and coalescing (to be specific, \eqref{Cd1}-\eqref{Cd3} fail for $B^{\tau}_k$, so we are unable to establish \eqref{prfeq2}).  To get around this, we construct a larger system where the relevant paths do not coalesce.  In addition to the main DyDW, $\mathcal{W}(\tau)$, we will need an independent, secondary DyDW, $\hat{\mathcal{W}}(\tau)$.  From now on, all ``arrows", ``clock rings", etc. should be understood to refer to $\mathcal{W}(\tau)$ (the main DyDW), unless otherwise specified.

Given $S_{l_k}^{\tau}$ and $S_{r_k}^{\tau}$ we want to construct non-coalescing versions, $X_{l_k}^{\tau}$ and $X_{r_k}^{\tau}$.  We accomplish this by letting $X_{l_k}^{\tau}=S_{l_k}^{\tau}$, and taking $X_{r_k}^{\tau}$ to be the path from $r_k$ that follows the arrows (from $\mathcal{W}(\tau)$) unless it meets $X_{l_k}^{\tau}$.  If $X_{r_k}^{\tau}$ meets $X_{l_k}^{\tau}$ at space-time $z^*=(x^*,t^*) \in \Z^2_{\text{even}}$, then on $[t^*,t^*+1]$ we let $X_{r_k}^{\tau}$ follow the arrow at $z^*$ from $\hat{\mathcal{W}}(\tau)$ (at dynamical time $\tau$).  At time $t^*+1$ we repeat this, following $\hat{\mathcal{W}}(\tau)$ if the paths are together, but following $\mathcal{W}(\tau)$ otherwise.  Continuing in this manner we get an independent pair of non-coalescing simple symmetric random walks $X_{l_k}^{\tau}$ and $X_{r_k}^{\tau}$.  Now we define new rectangle events, $C^{\tau}_k$:
\begin{align*} 
C^{\tau}_0:=&B^{\tau}_0, \\
C^{\tau}_k:=&\{|X_{l_k}^{\tau}(t)| \leq d_k \text{ and } |X_{r_k}^{\tau}(t)| \leq d_k \;\; \forall t \in [t_k,t_{k+1}] \} \; \; \text{for }k\geq1.
\end{align*}
Notice that $C^{\tau}_k$ implies $B^{\tau}_k$.  This is because the only difference between $X_{l_k}^{\tau}$,$X_{r_k}^{\tau}$ and $S_{l_k}^{\tau}$,$S_{r_k}^{\tau}$ is the (possible) extension of $X_{r_k}^{\tau}$ beyond the initial meeting point.  So if we can show:
\begin{equation} \label{C}
\mathbb{P}(\exists \tau \in [0,1] \text{ s.t. }\bigcap_{k\geq0} C^{\tau}_k\text{ occurs}) >0,
\end{equation}
then \eqref{B}, and thus Theorem \ref{thm1}, will follow immediately.  The next two sections will be devoted to proving \eqref{C}.

\end{section}

\begin{section}{A Decorrelation Bound}\label{decsec}$\quad$ 
Throughout this section we assume $\tau,\tau'\in[0,1],\tau<\tau'$ and we fix arbitrary $k\geq1, \gamma>1$.  We also translate the paths to start at $t=0$.  That is, we set $Y_{l}^{\tau}(t):=X_{l_k}^{\tau}(t_k+t)$ and $Y_{r}^{\tau}(t):=X_{r_k}^{\tau}(t_k+t)$ ($k$ is fixed so we drop it from the notation).  We will also consider diffusively rescaled versions of these paths, $\tilde{Y}_{l}^{\tau}(t):=Y_{l}^{\tau}(td_k^2)/d_k$ and $\tilde{Y}_{r}^{\tau}(t):=Y_{r}^{\tau}(td_k^2)/d_k$.  The relevant ``rectangle event'' is then:
\begin{align*}
 C^{\tau}:= &\{|Y_{l}^{\tau}(t)| \leq d_k \text{ and } |Y_{r}^{\tau}(t)| \leq d_k \;\; \forall t \in [0,d_k^2] \}  \\
=&\{|\tilde{Y}_{l}^{\tau}(t)| \leq 1 \text{ and } |\tilde{Y}_{r}^{\tau}(t)| \leq 1 \;\; \forall t \in [0,1] \}.
\end{align*}
See the beginning of Section \ref{struct} for definitions of $t_k,d_k,l_k,r_k$.  $X_{l_k}^{\tau},X_{r_k}^{\tau}$ are defined at the end of Section \ref{struct}.

Similarly to \cite{OP} we define $\Delta:=\frac{1}{d_k |\tau-\tau'|}$ (take their $\delta=d_k^{-1}$). As in \cite{OP}, the key ingredient for the proof of \eqref{C} is a decorrelation bound for the rectangle events:
\begin{prop}\label{dec}
 There exist $c,a \in (0,\infty)$ such that:
 \begin{center}
  $\Prob(C^{\tau} \cap C^{\tau'}) \leq \Prob(C^0)^2 + c\left(\Delta \right)^a \leq \Prob(C^0)^2 + c\left(\dfrac{1}{\gamma^k |\tau-\tau'|}\right)^a\!\!$, 
 \end{center}
 with $a,c$ independent of $ k, \tau$ and $\tau'$.
\end{prop}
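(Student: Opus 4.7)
The plan is to build a coupling of the quadruple $(Y_l^\tau, Y_r^\tau, Y_l^{\tau'}, Y_r^{\tau'})$ with two \emph{independent} pairs, each having the marginal law of $(Y_l^0, Y_r^0)$, and to show the coupling succeeds with probability at least $1-c\Delta^a$. On the coupling-success event the joint rectangle event factorises, giving exactly $\Prob(C^0)^2$; on the failure event we bound crudely by the failure probability, $c\Delta^a$. The proposition then follows.

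First focus on the pair $(Y_l^\tau, Y_l^{\tau'})$. By the Poisson clock construction this is a \emph{sticky pair} of simple random walks: away from a common site they evolve using the arrows of $\mathcal{W}(\tau)$ and $\mathcal{W}(\tau')$, which are independent at any site whose clock has rung in $[\tau,\tau']$; at a shared site $z$ the two arrows coincide iff the clock at $z$ has not rung, an event of probability $e^{-|\tau-\tau'|}$. A standard coupling realises this sticky pair jointly with an independent pair of simple random walks, in such a way that they agree outside the random set of time steps on which the two sticky paths actually travel together. The expected total sticking time up to horizon $d_k^2$ is of order $d_k/|\tau-\tau'| = d_k^2\,\Delta$, since the intersection local time of two independent simple random walks in a diffusive window of width $d_k$ and height $d_k^2$ is of order $d_k$, and each meeting triggers a geometric stick of mean of order $1/|\tau-\tau'|$.

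The same analysis applies to $(Y_r^\tau, Y_r^{\tau'})$, using the arrows and clocks of $\hat{\mathcal{W}}$ at the sites where the $r$-paths have hit the $l$-paths. Declare the global coupling a failure whenever either pair accumulates sticking time exceeding $\Delta^{1-a}\cdot d_k^2$, for a small $a\in(0,1)$ to be chosen. Markov's inequality applied to the expected sticking time then gives failure probability at most $c\Delta^a$. On the complementary success event, the four paths' dynamics are driven by disjoint collections of clock rings and fresh arrows, so the conditional joint probability of $C^\tau\cap C^{\tau'}$ equals $\Prob(C^0)^2$ by stationarity in $\tau$.

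The main obstacle is the nontrivial cross-interaction between the $l$-pair and the $r$-pair through the auxiliary web: since $Y_r^\tau$ switches to $\hat{\mathcal{W}}(\tau)$ arrows precisely when it meets $Y_l^\tau$, the couplings for the two colour pairs cannot be carried out in parallel. This is handled by a nested construction in which the $l$-pair is built first, and the $r$-pair is built conditionally on the $l$-pair, using $\mathcal{W}$-arrows away from the $l$-trajectory and $\hat{\mathcal{W}}$-arrows on it. Because $\hat{\mathcal{W}}$ has its own independent clock processes, the sticky-pair estimates carry through conditionally, and a union bound over the two colour pairs produces the single $c\Delta^a$ correction. The scheme parallels the one-sided decorrelation bound in \cite{OP}, and the exponent $a$ produced there should transfer directly.
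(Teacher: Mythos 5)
There is a genuine gap at the heart of your scheme: the claim that ``on the coupling-success event the joint rectangle event factorises, giving exactly $\Prob(C^0)^2$.'' Your success event only requires the accumulated sticking time to be small (at most $\Delta^{1-a}d_k^2$), not zero, so on that event the $\tau$- and $\tau'$-paths still share arrows during the sticking steps and are \emph{not} independent; moreover, conditioning on a success event defined through the sticking amount distorts the marginal laws, so even $\Prob(C^\tau \mid \text{success})$ is not $\Prob(C^0)$, and stationarity in $\tau$ does not rescue this. (You cannot instead demand zero sticking: two walks started at $\pm d_{k-1}$ inside a window of width $d_k$ meet within time $d_k^2$ with probability bounded away from $0$, so that event is not rare.) Thus Markov's inequality on the sticking time, by itself, does not produce the additive $c\Delta^a$ correction to $\Prob(C^0)^2$; the missing content is the mechanism converting ``small sticking time'' into ``the joint probability of the rectangle events is within $c\Delta^a$ of the product.''

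The paper supplies exactly this mechanism, and your outline omits all of it. One decomposes each path as $Y^{\tau}=Y^{\tau}_{d}(G(t))+Y^{\tau}_{s}(t-G(t))$, where the $d$-parts (non-sticking steps) at $\tau$ and $\tau'$ are genuinely independent; one then introduces the enlarged events $\{C_d^{\tau}+\Delta^{\alpha}\}$, writes $\Prob(C^{\tau}\cap C^{\tau'}) \leq \Prob(\{C_d^{\tau}+\Delta^{\alpha}\}\cap\{C_d^{\tau'}+\Delta^{\alpha}\}) + 2\Prob(C^{\tau}\setminus\{C_d^{\tau}+\Delta^{\alpha}\})$ (no conditioning), factorises the first term by independence, and controls the two error terms separately: $\Prob(\{C_d^{\tau}+\Delta^{\alpha}\}\setminus C_d^{\tau})\leq c'\Delta^{\alpha}$ via Brownian approximation (boundedness of the distribution of the running maximum), and $\Prob(C^{\tau}\setminus\{C_d^{\tau}+\Delta^{\alpha}\})$ via the sticking-time bound \emph{combined with} a modulus-of-continuity estimate (Garsia--Rodemich--Rumsey) to control the time-change discrepancy $|\tilde{Y}_{d}^{\tau}(\bar{G}(t))-\tilde{Y}_{d}^{\tau}(t)|$ as well as the sticking displacement $\tilde{Y}_{s}^{\tau}(t-\bar{G}(t))$. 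Your estimate of the sticking magnitude (order $d_k^2\Delta$) and your nested treatment of the $\hat{\mathcal{W}}$-interaction are in the right spirit --- the paper makes the latter precise by splitting sticking into $ll$, $lr$, $rl$, $rr$ types and stochastically dominating each by the one-path sticking process of \cite{OP} --- but without the enlarged events, the perturbation estimates, and the unconditional inclusion bound, the proposed factorisation step fails and the proposition does not follow.
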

\noindent Note that the second inequality is follows immediately from the definitions of $\Delta,d_k$.  The remainder of this section is devoted to proving the first inequality, and thus Proposition \ref{dec}.  The structure is similar to the proof of Proposition 3.1 from \cite{OP}, with a few necessary modifications.

As discussed in the previous section, paths from the DyDW at different dynamical times interact by sticking.  This sticking leads to dependence between the web paths.  Our modified paths (the $Y_{\tau}$'s) have their own version of sticking that is slightly more complicated.  To prove Proposition~\ref{dec} we will prove bounds for the amount of sticking, which will allow us to bound the dependence between the $C^{\tau}$'s.  We begin with some notation and definitions.

We call $n\in\Z$ a ``sticking time'' if a $Y^{\tau}$-path and a $Y^{\tau'}$-path follow the same arrow at time $n$.  For this to occur, a pair of paths from $Y_{l}^{\tau}, Y_{l}^{\tau'}, Y_{r}^{\tau}, Y_{r}^{\tau'}$ need to be at the same space-time location and follow the arrow from the same web ($\mathcal{W}$ or $\hat{\mathcal{W}}$, see the end of Section \ref{struct}).  In addition, this arrow must not have been updated in $[\tau,\tau']$.  This can happen in five ways:
\begin{enumerate}[(i)]
 \item $Y_{l}^{\tau}(n)=Y_{l}^{\tau'}(n)$ no ring in $[\tau,\tau']$,
 \item $Y_{l}^{\tau}(n)=Y_{r}^{\tau'}(n) \neq Y_{l}^{\tau'}(n)$ no ring in $[\tau,\tau']$,
 \item $Y_{l}^{\tau'}(n)=Y_{r}^{\tau}(n) \neq Y_{l}^{\tau}(n)$ no ring in $[\tau,\tau']$,
 \item $Y_{r}^{\tau}(n)=Y_{r}^{\tau'}(n) \neq Y_{l}^{\tau}(n),Y_{l}^{\tau'}(n)$ no ring in $[\tau,\tau']$,
 \item $Y_{r}^{\tau}(n)=Y_{r}^{\tau'}(n)=Y_{l}^{\tau}(n)=Y_{l}^{\tau'}(n)$ no $\hat{\mathcal{W}}$-ring in $[\tau,\tau']$.
\end{enumerate}
We will call (i) an $ll$(left-left)-sticking time, (ii) an $lr$-sticking time, (iii) an $rl$-sticking time, and (iv),(v) will both be $rr$-sticking times.  These names refer to which pair(s) of paths are sticking at time $n$.  

Given $s\in[0,\infty)$ let $n_s$ be the unique $n\in\Z$ such that $s\in[n,n+1)$.  We define:
\begin{align*}
  g(s) := \begin{cases}
   0 & \text{if $n_s$ is a sticking time} \\
   1 & \text{otherwise}
  \end{cases}
 \end{align*}
\begin{center}
 $G(t):=\int_0^t g(s) ds$.
\end{center}
We will also need:
\begin{align*}
  g_{ll}(s) := \begin{cases}
   0 & \text{if $n_s$ is an $ll$-sticking time} \\
   1 & \text{otherwise}
  \end{cases}
 \end{align*}
\begin{center}
 $G_{ll}(t):=\int_0^t g_{ll}(s) ds$,
\end{center}
and $G_{lr},G_{rl},G_{rr}$, which are defined analogously.

Notice that $t-G(t)$ is the amount of time spent sticking up to time $t$.  So if we make the time change $t \rightarrow t-G(t)$ we will include only the steps where a pair of paths from $Y_{l}^{\tau}, Y_{l}^{\tau'}, Y_{r}^{\tau}, Y_{r}^{\tau'}$ are sticking.  Similarly if we make the time change $t \rightarrow G(t)$ we will include only non-sticking steps.  This allows us to decompose the paths as:
\begin{align} \label{sep}
Y_{l}^{\tau}(t)=&Y_{l_d}^{\tau}(G(t))+Y_{l_s}^{\tau}(t-G(t)), \nonumber  \\
Y_{r}^{\tau}(t)=&Y_{r_d}^{\tau}(G(t))+Y_{r_s}^{\tau}(t-G(t)), \nonumber  \\
Y_{l}^{\tau'}(t)=&Y_{l_d}^{\tau'}(G(t))+Y_{l_s}^{\tau'}(t-G(t)), \nonumber \\
Y_{r}^{\tau'}(t)=&Y_{r_d}^{\tau'}(G(t))+Y_{r_s}^{\tau'}(t-G(t)), 
\end{align}
with $Y_{l_d}^{\tau}(0)=Y_l^{\tau}(0)=-d_{k-1}$, $Y_{r_d}^{\tau}(0)=Y_r^{\tau}(0)=d_{k-1}$, and $Y_{l_s}^{\tau}(0)=Y_{r_s}^{\tau}(0)=0$ (similarly for $\tau'$).  The $Y_{l_d}$'s and $Y_{r_d}$'s correspond to the time change $t \rightarrow G(t)$ and thus include only the non-sticking steps of each walk.  This means that $Y_{l_d}^{\tau},Y_{r_d}^{\tau}$ and $Y_{l_d}^{\tau'},Y_{r_d}^{\tau'}$ follow different, independent arrows, and thus are independent.  The $d,s$ subscripts of $l_d,l_s,r_d,r_s$ should not be confused with the $k$ of $l_k,r_k$ used earlier.  As mentioned above, $k$ is fixed throughout this section.  The $d,s$ subscripts merely denote the different components of the splitting defined in \eqref{sep}, ``$d$" for ``different" (i.e. non-sticking) and ``$s$" for ``sticking".

To make the above splitting work for the $\tilde{Y}$'s the appropriate rescaling of $G$ is $\bar{G}(t):=G(td_k^2)/d_k^2$.  We then make the  time changes $t \rightarrow t-\bar{G}(t)$  and $t \rightarrow \bar{G}(t)$.  We would like a bound for $t-\bar{G}(t)$, the total amount of sticking for the rescaled paths in $[0,t]$.  This is given by an adaptation of Lemma 3.4 from \cite{OP}.  Let $C(t)$ and $\bar{C}(t):=C(td_k^2)/d_k^2$ be defined as in \cite{OP}, i.e. such that $t-C(t)$ is the sticking time for $S_0^{\tau}$ and $S_0^{\tau'}$.  Lemma 3.4 from \cite{OP} gives:
\begin{align*}
\Prob \left( \sup_{t\in[0,1]} (t-\bar{C}(t)) \geq \Delta^{\beta} \right) \leq \tilde{c}\,\Delta^{1-\beta}
\end{align*}
for any $\beta \in (0,1)$, with $\tilde{c}$ independent of $k, \tau$ and $\tau'$.  This lemma follows from a bound on the expected number of sticking steps combined with the Markov inequality, see \cite{OP} for the details.  We will need a similar bound for $\bar{G}(t)$.
\begin{lem} \label{3.4}
For any $0<\beta<1$
\begin{center}$\Prob \left( \sup_{t\in[0,1]} (t-\bar{G}(t)) \geq \Delta^{\beta} \right) \leq c''\,\Delta^{1-\beta}$,
\end{center}
where $c''\in(0,\infty)$ is independent of $k, \tau$ and $\tau'$.
\end{lem}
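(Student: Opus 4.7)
The plan is to reduce to controlling the expected total sticking time via Markov's inequality, mirroring the proof of Lemma 3.4 of \cite{OP} but with the bookkeeping adjusted for the four (rather than two) relevant paths and for the auxiliary web $\hat{\mathcal{W}}$.

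Since $g(s)\in\{0,1\}$, the function $\bar{G}$ is $1$-Lipschitz and non-decreasing, so $t\mapsto t-\bar{G}(t)$ is non-decreasing on $[0,1]$ and
\[
\sup_{t\in[0,1]}\bigl(t-\bar{G}(t)\bigr) \;=\; 1-\bar{G}(1) \;=\; \frac{d_k^2-G(d_k^2)}{d_k^2}.
\]
By Markov's inequality it suffices to show $\E[d_k^2-G(d_k^2)]\leq C\,d_k^2\,\Delta = C\,d_k/|\tau-\tau'|$ for a constant $C$ independent of $k,\tau,\tau'$. I then split by sticking type: since $1-g(s)\leq\sum_{\alpha}(1-g_\alpha(s))$ with $\alpha\in\{ll,lr,rl,rr\}$ (the $rr$ case absorbing both (iv) and (v)),
\[
d_k^2-G(d_k^2)\;\leq\;\sum_{\alpha}\bigl(d_k^2-G_\alpha(d_k^2)\bigr),
\]
so the task reduces to bounding each summand separately.

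For each type $\alpha$ I would write the $\alpha$-sticking time as a sum over excursions, where each excursion begins at an integer time $n$ at which the two relevant paths coincide (having been apart the previous step) and ends at the first subsequent clock ring, in $[\tau,\tau']$ and at the current location, of $\mathcal{W}$ (types (i)--(iv)) or $\hat{\mathcal{W}}$ (type (v)). Because the relevant ring process is independent of the walks up to that point, the excursion length is stochastically dominated by a geometric random variable of parameter $1-e^{-|\tau-\tau'|}\geq c|\tau-\tau'|$, with mean $O(1/|\tau-\tau'|)$. Hence
\[
\E\bigl[d_k^2-G_\alpha(d_k^2)\bigr]\;\leq\;\frac{C}{|\tau-\tau'|}\,\E[M_\alpha],
\]
where $M_\alpha$ counts the $\alpha$-excursions starting in $[0,d_k^2]$. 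To finish I would show $\E[M_\alpha]=O(d_k)$ by a meeting-time count: each member of each relevant pair is marginally a simple symmetric random walk, so the local CLT gives $\Prob(W_1(n)=W_2(n))=O(n^{-1/2})$, and summing over $n\leq d_k^2$ yields $O(d_k)$ expected coincidences. Combining the four bounds gives $\E[d_k^2-G(d_k^2)]\leq C\,d_k/|\tau-\tau'|$, as required.

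The main obstacle will be this last step: the members of each pair are coupled (either through shared arrows during sticking, as in $(Y_l^\tau,Y_l^{\tau'})$, or through the $\hat{\mathcal{W}}$-construction that defines $X_{r_k}$), and one must ensure that these correlations do not inflate the expected number of meetings beyond the independent-walks benchmark. In \cite{OP} this is done for the simpler pair $(S_0^\tau,S_0^{\tau'})$; here I would adapt their meeting-time calculation for the $ll,lr,rl$ types and, for the type (v) situation where all four paths coincide, reduce to the $rr$ count by noting that a type (v) ring of $\hat{\mathcal{W}}$ plays exactly the role of a type (iv) ring of $\mathcal{W}$ for the auxiliary pair, or by a short stochastic-domination argument comparing the relevant difference processes to lazy simple random walks.
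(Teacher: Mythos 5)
Your overall route differs from the paper's: the paper does not redo the expectation estimate at all, but instead couples each of the four sticking functionals with the sticking time $t-C(t)$ of the pair $(S_0^{\tau},S_0^{\tau'})$ from \cite{OP} --- matching the independent-excursion lengths $\Gamma_k$ in distribution and dominating the sticking lengths via $\Delta_k^{*}\le\Delta_k$ from the conditional no-ring bound --- and then, after a union bound over the four types, invokes Lemma 3.4 of \cite{OP} as a black box. Your skeleton (monotonicity of $t-\bar{G}(t)$, Markov's inequality, the four-type splitting, geometric domination of each sticking stretch with mean $O(1/|\tau-\tau'|)$) is sound in itself, and amounts to re-proving \cite{OP}'s lemma from scratch for each pair rather than reducing to it.

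The gap sits exactly at the step you flag as the ``main obstacle,'' and the justification you sketch there would fail. What you need to be $O(d_k)$ is $\E[M_\alpha]$, the number of sticking-excursion \emph{starts}; your proposed bound via ``marginally SRW plus local CLT gives $\Prob(W_1(n)=W_2(n))=O(n^{-1/2})$'' is false for these coupled pairs, since marginal laws give no control of the joint meeting probability and sticking inflates coincidences: for instance $\Prob\bigl(Y_l^{\tau}(n)=Y_l^{\tau'}(n)\bigr)\ge e^{-n|\tau-\tau'|}$, so the expected number of coincidences up to time $d_k^2$ is of order $\min\bigl(d_k^2,|\tau-\tau'|^{-1}\bigr)$, which exceeds $d_k$ precisely when $\Delta\ge 1$. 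The statement that is true, and that your argument still needs, is that the number of excursion starts is $O(d_k)$: between sticking stretches the two relevant paths move by independent increments, so after excising the sticking intervals the difference process is a lazy simple random walk run for at most $d_k^2$ steps, whose expected number of returns to the origin is $O(d_k)$ (equivalently, a renewal bound using the $t^{-1/2}$ tail of the return time; this is the role played by the $\Gamma_k=\Gamma_k^{*}$ identification in the paper's coupling). Until that excursion-counting step is supplied --- either by this excision/renewal argument or by coupling with $(S_0^{\tau},S_0^{\tau'})$ as the paper does --- the proposal does not close; note also that for the $lr$, $rl$ and $rr$ pairs one must check which web, $\mathcal{W}$ or $\hat{\mathcal{W}}$, supplies each increment when the paths are together or apart, a bookkeeping point the paper absorbs into its coupling.
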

\begin{proof}
Notice that by definition:
\begin{align} \label{Split} t-G(t) \leq \underset{(a)}{(t-G_{ll}(t))}+\underset{(b)}{(t-G_{lr}(t))}+\underset{(c)}{(t-G_{rl}(t))}+\underset{(d)}{(t-G_{rr}(t))} .
\end{align}
This is not an equality because the sticking times defined in (i)-(v) above are not mutually exclusive.  Our strategy will be to use Lemma 3.4 from \cite{OP} to bound the terms on the right side of \eqref{Split}.  We claim that each of $(a),(b),(c),(d)$ is stochastically bounded by $t-C(t)$ (given random variables $X,Y$, $X$ is said to stochastically bound $Y$ if $\Prob(Y>x) \leq \Prob(X>x)$ for all $x\in\R$).  For $(a)$ this is obvious, since $t-G_{ll}(t) \overset{d}{=} t-C(t)$ (equal in distribution).  This is because the $Y_l$'s are just translated web paths and the DyDW is invariant under space-time translations.  We now concentrate on $(d)$; $(b)$ and $(c)$ can be handled similarly.

We'd like to compare $t-G_{rr}(t)$, the amount of sticking for $Y_{r}^{\tau}$ and $Y_{r}^{\tau'}$, to $t-C(t)$, the amount of sticking for $S_0^{\tau}$ and $S_0^{\tau'}$.  We'll accomplish this by constructing coupled versions of the two processes.  In both cases there are two paths that alternate between identical sticking sections and independent non-sticking sections.  To be specific, we take $T_0=T^*_0:=0$ and for $k\geq0$ define:
\begin{align*}
T_{2k+1}:=& \inf \{k \geq T_{2k} : \text{ The clock at $S^{\tau}_0(k)=S^{\tau'}_0(k)$ rings in $[\tau,\tau']$} \}, \\
T_{2k+2}:=&\inf \{ k > T_{2k+1} : \; S_0^{\tau}(k)=S_0^{\tau'}(k) \}, \\
\Delta_k:=&T_{2k+1}-T_{2k} \geq 0,\; \Gamma_k:=T_{2k+2}-T_{2k+1} \geq 1,
\end{align*} \\
and:
\begin{align*}
T^*_{2k+1}:=&\inf \{k \geq T^*_{2k} : \text{ $k$ is not an $rr$-sticking time} \}, \\
T^*_{2k+2}:=&\inf \{k > T^*_{2k+1} : \; Y_{r}^{\tau}(k)=Y_{r}^{\tau'}(k) \}, \\
\Delta^*_k:=&T^*_{2k+1}-T^*_{2k} \geq 0,\; \Gamma^*_k:=T^*_{2k+2}-T^*_{2k+1} \geq 1.
\end{align*}
Then on $[T^{(*)}_{2k},T^{(*)}_{2k+1}]$ we have $S_0^{\tau}$ and $S_0^{\tau'}(Y_{r}^{\tau}$ and $Y_{r}^{\tau'})$ sticking for $\Delta^{(*)}_k$ steps, while on $[T^{(*)}_{2k+1},T^{(*)}_{2k+2}]$ they move independently until meeting at $T^{(*)}_{2k+2}$.  Parentheses are used here to indicate two cases (with a $*$ and without a $*$) simultaneously.  We will continue to use this convention, hopefully the meaning will be clear to the reader.  Next notice that $\Gamma_k$ and $\Gamma_k^*$ have the same distribution, they are both excursion times for pairs of independent random walks.  So we may take $\Gamma_k= \Gamma_k^*$ for our coupled versions.  To compare $\Delta_k,\Delta_k^*$, notice that:
\begin{align*}
\Prob \left( \Delta_k^{(*)} \geq j \right) =& \prod_{i=1}^j\Prob \left( \Delta_k^{(*)} \geq i | \Delta_k^{(*)} \geq i-1 \right) 
\end{align*}
and:
\vspace{-3mm}
\begin{align} \label{del1}
\Prob \left( \Delta_k^{*} \geq i | \Delta_k^{*} \geq i-1 \right) \leq \Prob \left( \Delta_k \geq i | \Delta_k \geq i-1 \right) \text{   for all $i\geq 1$},
\end{align}
so:
\vspace{-3mm}
\begin{align} \label{del2}
\Prob \left( \Delta_k^{*} \geq j \right) \leq \Prob \left( \Delta_k \geq j \right) \text{    for all } j,k \geq 0.
\end{align} 
To see \eqref{del1}, consider that $\Prob \left( \Delta_k \geq i | \Delta_k \geq i-1 \right)$ is just the probability of no clock ring in $[\tau, \tau']$.  For $\Delta_k^*$, we have the probability that $Y_{r}^{\tau}=Y_{r}^{\tau'} \neq Y_{l}^{\tau},Y_{l}^{\tau'}$ and there is no $\mathcal{W}$-ring, or $Y_{r}^{\tau}=Y_{r}^{\tau'}=Y_{l}^{\tau}=Y_{l}^{\tau'}$ and there is no $\hat{\mathcal{W}}$-ring.  These are disjoint events and the clocks are independent of the positions of previous arrows, so this is bounded by the probability of no clock ring.

Combining this with the above observations, we can couple $\Delta_k,\Delta_k^*$ and $\Gamma_k,\Gamma_k^*$ such that $\Delta_k^* \leq \Delta_k$ and $\Gamma_k=\Gamma_k^*$.  This means that the $rr$-sticking sections are shorter than the $S_0^{\tau},S_0^{\tau'}$ sticking sections, while the independent sections have the same length.  This implies $t-G_{rr}(t) \leq t-C(t)$ for the coupled versions, which shows $(d)$ is stochastically bounded by $t-C(t)$.  This can be proven for $(b),(c)$ by a nearly identical coupling argument, where the portion of the left/right paths after their first meeting is coupled with $S_0^{\tau},S_0^{\tau'}$.  So we've shown that $(a),(b),(c),(d)$ are each stochastically bounded by $t-C(t)$.  Combining this with \eqref{Split} we get:

\begin{align*}
\Prob \left( \sup_{t\in[0,1]} (t-\bar{G}(t)) \geq \Delta^{\beta} \right) =&
\Prob \left( \sup_{t\in[0,d_k^2]} (t-G(t)) \geq d_k^2\Delta^{\beta} \right) \\
\leq & 4 \Prob \left( \sup_{t\in[0,d_k^2]} (t-C(t)) \geq d_k^2\dfrac{\Delta^{\beta}}{4} \right) \text{   (using \eqref{Split} and above paragraph)}\\
=&4 \Prob \left( \sup_{t\in[0,1]} (t-\bar{C}(t)) \geq \dfrac{\Delta^{\beta}}{4} \right) \\
\leq & 4 \tilde{c} \, \left( \dfrac{\Delta}{4^{1/\beta}} \right)^{1-\beta} \text{    (by Lemma 3.4 from \cite{OP})}  \\
=& c''\, \Delta^{1-\beta}.
\end{align*}
This completes the proof since $\tilde{c}$, and thus $c''$, is independent of $k, \tau$ and $\tau'$.
\end{proof} 

Now we define $C^{\tau}_d$ to be the rectangle event for $Y_{l_d}^{\tau},Y_{r_d}^{\tau}$.  That is:
\begin{align*}
 C^{\tau}_d:= &\{|Y_{l_d}^{\tau}(t)| \leq d_k \text{ and } |Y_{r_d}^{\tau}(t)| \leq d_k \;\; \forall t \in [0,d_k^2] \}  \\
=&\{|\tilde{Y}_{l_d}^{\tau}(t)| \leq 1 \text{ and } |\tilde{Y}_{r_d}^{\tau}(t)| \leq 1 \;\; \forall t \in [0,1] \}.
\end{align*}
Given $r>0$ we define the $r$-approximations of our rectangle events as:
\begin{align*}
\{C^{\tau}_{(d)}+r\}:=&\{|Y_{l_{(d)}}^{\tau}(t)| \leq (1+r)d_k \text{ and } |Y_{r_{(d)}}^{\tau}(t)| \leq (1+r)d_k \;\; \forall t \in [0,d_k^2] \}  \\
=& \{|\tilde{Y}_{l_{(d)}}^{\tau}(t)| \leq 1+r \text{ and } |\tilde{Y}_{r_{(d)}}^{\tau}(t)| \leq 1+r \;\; \forall t \in [0,1] \}.
\end{align*}
Recall that $Y_{l_d}^{\tau},Y_{r_d}^{\tau}$ are independent of $Y_{l_d}^{\tau'},Y_{r_d}^{\tau'}$, and therefore:
\begin{align} \label{Cd1}
C^{\tau}_d(\{C^{\tau}_{d}+r\}) \text{ is independent of } C^{\tau'}_d(\{C^{\tau'}_{d}+r\}).
\end{align}
We also have:
\begin{align} \label{Cd2}
 (Y_{l_d}^{\tau},Y_{r_d}^{\tau}) \stackrel{d}{=} (Y_{l}^{\tau},Y_{r}^{\tau}),
\end{align}
since both are just pairs of independent random walks.  So: 
\begin{align} \label{Cd3}
\Prob(C^{\tau}_d)=\Prob(C^{\tau})=\Prob(C^{0}).
\end{align}

We will need the following adaptation of Lemma 3.3 from \cite{OP}:
\begin{lem} \label{3}
Given any $\alpha<1/2$, there is $c'\in(0,\infty)$ independent of $\Delta, k$ such that: 
\begin{center}
$\Prob\left(\{C^{\tau}_{d}+\Delta^{\alpha}\} \setminus  C^{\tau}_{d}\right)\leq c'\Delta^{\alpha}$.
\end{center}
\end{lem}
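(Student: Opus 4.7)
The plan is to expand the set difference. By definition, the event $\{C^\tau_d + \Delta^\alpha\} \setminus C^\tau_d$ occurs precisely when both rescaled paths $\tilde Y_{l_d}^\tau$ and $\tilde Y_{r_d}^\tau$ remain within $[-(1+\Delta^\alpha),\, 1+\Delta^\alpha]$ throughout $[0,1]$, while at least one of them exits $[-1,1]$ at some point. A union bound over the four ways this can happen reduces the lemma to estimating four probabilities of the form
\[
\Prob\!\left(\sup_{t\in[0,1]} \tilde Y^\tau_{\star}(t) \in (1,\, 1+\Delta^\alpha]\right),
\]
together with the corresponding infimum versions, where $\star \in \{l_d, r_d\}$, and showing each is at most $(c'/4)\,\Delta^\alpha$.

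By \eqref{Cd2}, each marginal $\tilde Y^\tau_\star$ is a diffusively rescaled simple symmetric random walk starting at $\pm d_{k-1}/d_k$, which is close to $\pm\gamma^{-1}$ and hence safely bounded away from $\pm 1$. To bound a single such term I would apply the reflection principle for SRW: the function $x\mapsto\Prob\!\left(\sup_{[0,1]}\tilde Y^\tau_\star \ge x\right)$ admits a closed-form expression whose derivative in $x$ is bounded uniformly in $k$ on compact subsets of $\{x : x \ge 1/2\}$, which yields the desired $O(\Delta^\alpha)$ estimate. Equivalently, one may appeal to Donsker's invariance principle together with the fact that the running maximum of Brownian motion admits a bounded density on any compact interval bounded away from its starting point.

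The main technical point is precisely this uniform-in-$k$ density bound on the running extremum of a rescaled SRW; this is exactly the estimate that underlies Lemma 3.3 of \cite{OP} in the one-path setting. Once it is in hand, the adaptation to our setting is routine: because $Y_{l_d}^\tau$ and $Y_{r_d}^\tau$ are independent simple random walks, no sticking decomposition is required here, and the only modification of the \cite{OP} argument is the trivial union bound over the four extrema introduced by the two-sided, two-path event.
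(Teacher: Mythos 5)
Your proposal is correct and follows essentially the same route as the paper: the same union bound reducing the set difference to four events of the form $\sup_{t\in[0,1]}\tilde Y^{\tau}_{\star}(t)\in(1,1+\Delta^{\alpha}]$ (and the infimum analogues), each then bounded by $O(\Delta^{\alpha})$ using the fact that the running extremum of the (diffusively rescaled, Brownian-approximated) walk has a uniformly bounded distribution function, which is exactly the Lemma~3.3 argument of \cite{OP} that the paper invokes. The reflection-principle variant you mention is just an equivalent way of obtaining the same density bound, so no substantive difference.
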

\begin{proof}
\begin{align*}
\Prob\left(\{C^{\tau}_{d}+\Delta^{\alpha}\} \setminus  C^{\tau}_{d}\right) \leq \; &\Prob\left( \inf_{t\in[0,1]} \tilde{Y}_{l_{d}}^{\tau}(t) \in [-1-\Delta^{\alpha},-1)\right)+
\Prob\left( \sup_{t\in[0,1]} \tilde{Y}_{l_{d}}^{\tau}(t) \in (1,1+\Delta^{\alpha}] \right)\\
+&\Prob\left( \inf_{t\in[0,1]} \tilde{Y}_{r_{d}}^{\tau}(t) \in [-1-\Delta^{\alpha},-1) \right) 
+\Prob\left( \sup_{t\in[0,1]} \tilde{Y}_{r_{d}}^{\tau}(t) \in (1,1+\Delta^{\alpha}] \right).
\end{align*}
Now each of the four terms on the right is bounded by $c \, \Delta^{\alpha}$.  This follows exactly as in the proof of Lemma 3.3 in \cite{OP}.  To see this, note that the $\tilde{Y}$'s are simple symmetric random walks started at $\pm d_{k-1}/d_k \in [-1,1]$, diffusively rescaled by $\delta=d_k^{-1}$.  We can thus approximate the $\tilde{Y}$'s by Brownian motion paths (for details see \cite{OP13} and \cite{OP}, Lemma 3.3).  The result then follows, as the maximum (minimum) process of a Brownian motion has a bounded probability density function.
\end{proof}

The final ingredient for the proof of Proposition \ref{dec} is a bound on the modulus of continuity of a random walk.  This is given by Lemma 3.5 from \cite{OP}:
\begin{lem}(Lemma 3.5, \cite{OP}) \label{3.5} \\
Let $S(t)$ be a simple symmetric random walk and define $\tilde{S}(t):=S(t/\delta^2)\delta$. \\
Let $\omega_{\tilde{S}}(\epsilon):= \sup_{s,t\in[0,1], |s-t|< \epsilon}|\tilde{S}(t)-\tilde{S}(s)|$ be the modulus of continuity of $\tilde{S}$.  Let $\alpha,\beta\in(0,\infty)$ be such that $\beta/2>\alpha$.  For any $r\geq 0$, there exists $c$ (independent of $\Delta$ and $\delta$) such that:
\begin{center}
$ \Prob \left( \omega_{\tilde{S}}(\Delta^{\beta}) \geq \dfrac{\Delta^{\alpha}}{2} \right) \leq c \, \Delta^r  $.
\end{center}
\end{lem}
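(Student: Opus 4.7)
The plan is to combine a discretization of $[0,1]$ at scale $\Delta^\beta$ with a Hoeffding-type concentration bound on the random walk increments. Introduce grid points $t_i = i\Delta^\beta$ for $i = 0, 1, \ldots, N$ with $N = \lceil \Delta^{-\beta}\rceil$. Any pair $(s,t) \in [0,1]^2$ with $|s-t| < \Delta^\beta$ lies in the union of two adjacent cells, so a triangle-inequality argument shows that on the event $\{\omega_{\tilde S}(\Delta^\beta) \geq \Delta^\alpha/2\}$ the single-cell oscillation $M_i := \sup_{s \in [t_i, t_{i+1}]} |\tilde S(s) - \tilde S(t_i)|$ must exceed $\Delta^\alpha/8$ for some $i \leq N$. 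A union bound then reduces the problem to estimating $\Prob(M_i \geq \Delta^\alpha/8)$ for a single fixed $i$.

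For that single cell, rewrite $\tilde S(s) - \tilde S(t_i) = \delta\bigl(S(s/\delta^2) - S(t_i/\delta^2)\bigr)$, so $\{M_i \geq \Delta^\alpha/8\}$ is equivalent to a simple random walk exceeding $\Delta^\alpha/(8\delta)$ in absolute value within $n := \lceil \Delta^\beta/\delta^2 \rceil$ steps (after an irrelevant time shift). In the nontrivial regime $\delta^2 \leq \Delta^\beta$, the reflection principle combined with Hoeffding's inequality gives
\begin{equation*}
 \Prob\bigl(\max_{0 \leq k \leq n} |S(k)| \geq M\bigr) \leq 4 \exp\Bigl(-\frac{M^2}{2n}\Bigr),
\end{equation*}
and substituting $M = \Delta^\alpha/(8\delta)$ and $n \leq \Delta^\beta/\delta^2 + 1$ yields a cell bound of the form $4\exp\bigl(-c'\Delta^{-(\beta - 2\alpha)}\bigr)$, since the $\delta^{-2}$ factors cancel.

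Taking the union bound over the $N \leq \Delta^{-\beta}+1$ cells, we obtain
\begin{equation*}
 \Prob\bigl(\omega_{\tilde S}(\Delta^\beta) \geq \Delta^\alpha/2\bigr) \leq C\,\Delta^{-\beta}\exp\bigl(-c'\Delta^{-(\beta - 2\alpha)}\bigr).
\end{equation*}
Because the hypothesis $\beta/2 > \alpha$ forces $\beta - 2\alpha > 0$, the exponential factor beats any polynomial rate $\Delta^r$ as $\Delta \to 0$. For $\Delta$ bounded away from zero, or in the degenerate regime $\delta^2 > \Delta^\beta$ (where a single cell contains no random walk steps so $M_i$ is either $0$ or at most $\delta$), the probability is trivially at most $1$ and can be absorbed into a sufficiently large constant $c$. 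The only real subtlety is checking that $c$ can be taken uniform in both $\Delta$ and $\delta$; this is handled by separating the Gaussian-tail regime, where the super-polynomial decay is explicit, from the boundary regimes, where crude bounds suffice.
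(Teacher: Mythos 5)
Your route is genuinely different from the paper's: the paper offers no argument for this lemma, only the remark that it is a consequence of the Garsia--Rodemich--Rumsey inequality \cite{GRR}, with the actual proof deferred to \cite{OP}, where GRR is used to control the modulus of continuity of the rescaled walk uniformly in $\delta$ and Markov's inequality then yields the polynomial rate. Your argument --- discretizing $[0,1]$ into cells of width $\Delta^{\beta}$, reducing $\omega_{\tilde S}(\Delta^{\beta})\ge \Delta^{\alpha}/2$ to a single-cell oscillation of size $\Delta^{\alpha}/8$ by the triangle inequality, and bounding each cell by L\'evy reflection plus Hoeffding --- is more elementary and in fact stronger: it gives $C\,\Delta^{-\beta}\exp(-c'\Delta^{-(\beta-2\alpha)})$, which beats every polynomial rate precisely because $\beta>2\alpha$, and the cancellation of the $\delta^{-2}$ factors in $M^2/(2n)$ is the correct mechanism making the bound uniform in $\delta$ in the main regime.

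There is, however, a genuine gap in how you dispose of the boundary regimes. You assert that when $\delta^{2}>\Delta^{\beta}$ the probability ``is trivially at most $1$ and can be absorbed into a sufficiently large constant $c$''; absorption into a constant only works if $\Delta$ is bounded away from $0$ there, since for $r>0$ a probability of order one cannot be dominated by $c\,\Delta^{r}$ with $\Delta$ small. Indeed, with no constraint tying $\delta$ to $\Delta$ the stated inequality is simply false: take $\delta$ of order one and $\Delta\to 0$; any window of width $\Delta^{\beta}$ containing a jump time of $\tilde S$ has oscillation $\delta\ge\Delta^{\alpha}/2$, so the left-hand side equals $1$ while the right-hand side vanishes. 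The repair is to invoke the standing relations of this section, which are implicit in the lemma as used: $\delta=d_k^{-1}$, $\Delta=1/(d_k|\tau-\tau'|)$ and $|\tau-\tau'|\le 1$ give $\delta\le\Delta$, and since in the application $\beta<1$ (any $\beta\le 2$ would do), $\Delta\le 1$ forces $\delta^{2}\le\Delta^{2}\le\Delta^{\beta}$; hence the degenerate regime can only occur when $\Delta>1$, where $c\,\Delta^{r}\ge c\ge 1$ makes the bound trivial. With that observation added (and a word about the harmless non-integer endpoints of the cells in walk time), your proof is complete and uniform in $k$, $\tau$, $\tau'$ as required.
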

\noindent This is a consequence of the Garsia-Rodemich-Rumsey inequality \cite{GRR}.  For a proof see \cite{OP}.

We may now prove Proposition 1.  The remaining steps are nearly identical to the proof of Proposition 3.1 from \cite{OP} (see the end of Section 3).  We include them for the sake of completeness.

For any $0<\alpha<1/2$, we have:
\begin{align} \label{prfeq1}
 \Prob \left( C^{\tau} \cap C^{\tau'} \right) \leq & \Prob \left( \{ C_d^{\tau} + \Delta^{\alpha} \} \cap \{ C_d^{\tau'} + \Delta^{\alpha} \} \right) \nonumber \\
+&2\Prob \left(  C^{\tau}  \setminus \{ C_d^{\tau} + \Delta^{\alpha} \} \right), 
\end{align}
where we used the equidistribution of $(C^{\tau},\{ C_d^{\tau} + \Delta^{\alpha} \})$ and $(C^{\tau'},\{ C_d^{\tau'} + \Delta^{\alpha} \})$.  Using \eqref{Cd1}-\eqref{Cd3} we get:
\begin{align} \label{prfeq2}
\Prob ( \{ C_d^{\tau} + \Delta^{\alpha} \} \cap \{ C_d^{\tau'} + \Delta^{\alpha} \} ) =& \Prob ( \{ C_d^{\tau} + \Delta^{\alpha} \}) \Prob( \{ C_d^{\tau'} + \Delta^{\alpha} \} )\nonumber \\
\leq & \Prob (  C_d^{\tau})^2 + 2\Prob(  \{C^{\tau}_{d}+\Delta^{\alpha}\} \setminus  C^{\tau}_{d} ) \nonumber \\
=& \Prob (  C^0)^2 + 2\Prob(  \{C^{\tau}_{d}+\Delta^{\alpha}\} \setminus  C^{\tau}_{d} ).
\end{align}
Combined with Lemma \ref{3} this gives:
\begin{equation} \label{prfeq3}
\Prob ( \{ C_d^{\tau} + \Delta^{\alpha} \} \cap \{ C_d^{\tau'} + \Delta^{\alpha} \} ) \leq \Prob (  C^0)^2 +2c'\Delta^{\alpha}.
\end{equation}
Now that we have \eqref{prfeq1} and \eqref{prfeq3} we just need $\hat{c},a'$ such that:
\begin{equation}\label{prfeq4}
\Prob \left(  C^{\tau}  \setminus \{ C_d^{\tau} + \Delta^{\alpha} \} \right) \leq \hat{c} \, \Delta^{a'}.
\end{equation}
Recall the splitting of the $Y^{\tau}$'s given by \eqref{sep}.  Analogous considerations for the $\tilde{Y}^{\tau}$'s gives:
\begin{align}
\tilde{Y}_{l}^{\tau}(t)=&\tilde{Y}_{l_d}^{\tau}(\bar{G}(t))+\tilde{Y}_{l_s}^{\tau}(t-\bar{G}(t)) \nonumber \\
=&\tilde{Y}_{l_d}^{\tau}(t) + [\tilde{Y}_{l_d}^{\tau}(\bar{G}(t)) - \tilde{Y}_{l_d}^{\tau}(t)] +\tilde{Y}_{l_s}^{\tau}(t-\bar{G}(t)), \label{peq5} \\
\tilde{Y}_{r}^{\tau}(t)=&\tilde{Y}_{r_d}^{\tau}(t) + [\tilde{Y}_{r_d}^{\tau}(\bar{G}(t)) - \tilde{Y}_{r_d}^{\tau}(t)] +\tilde{Y}_{r_s}^{\tau}(t-\bar{G}(t)). \label{peq6}
\end{align}
Notice that all the $\tilde{Y}$'s appearing in \eqref{peq5}, \eqref{peq6} are simple symmetric random walks rescaled by $\delta=d_k^{-1}$, as in Lemma \ref{3.5}.  Also, we've taken $\alpha<1/2$, so we may choose $0<\beta<1$ such that $\beta/2>\alpha$.  Then:
\begin{align}
\Prob \left(  C^{\tau}  \setminus \{ C_d^{\tau} + \Delta^{\alpha} \} \right) \leq& \, 
\Prob \left( |\tilde{Y}_l^{\tau}-\tilde{Y}_{l_d}^{\tau}|_{\infty} \geq \Delta^{\alpha} \right) +
\Prob \left( |\tilde{Y}_r^{\tau}-\tilde{Y}_{r_d}^{\tau}|_{\infty} \geq \Delta^{\alpha} \right) \nonumber \\
\leq& \, \Prob \left( |\tilde{Y}_{l_d}^{\tau}(\bar{G}(t))-\tilde{Y}_{l_d}^{\tau}(t)|_{\infty} \geq \dfrac{\Delta^{\alpha}}{2} \right) + 
\Prob \left( |\tilde{Y}_{l_s}^{\tau}(t-\bar{G}(t))|_{\infty} \geq \dfrac{\Delta^{\alpha}}{2} \right) \nonumber \\
&+  \Prob \left( |\tilde{Y}_{r_d}^{\tau}(\bar{G}(t))-\tilde{Y}_{r_d}^{\tau}(t)|_{\infty} \geq \dfrac{\Delta^{\alpha}}{2} \right) + 
\Prob \left( |\tilde{Y}_{r_s}^{\tau}(t-\bar{G}(t))|_{\infty} \geq \dfrac{\Delta^{\alpha}}{2} \right) \nonumber \\
\leq& \, 4 \Prob \left( \omega_{\tilde{S}}(\Delta^{\beta}) \geq \dfrac{\Delta^{\alpha}}{2} \right) 
+ 4\Prob \left( \sup_{t\in[0,1]} (t-\bar{G}(t)) \geq \Delta^{\beta} \right) \nonumber \\
\leq& \, 4c \, \Delta^r + 4c''\,\Delta^{1-\beta}, \label{peqend}
\end{align}
where $|\cdot|_{\infty}$ denotes the sup norm restricted to $[0,1]$.  The last inequality follows from Lemmas \ref{3.4} and \ref{3.5}.  This completes the proof of Proposition \ref{dec}.
\end{section}

\begin{section}{Proof of Theorem \ref{thm1}}\label{thm1prf}
\quad Now that we have Proposition \ref{dec} we are almost ready to prove Theorem \ref{thm1}.  We'd like to show the existence of exceptional times at which $\bigcap_{k\geq 0}C^{\tau}_k$ occurs (see the end of Section \ref{struct}).  We just need one more Lemma from \cite{OP}:
\begin{lem} \label{4.3} (Lemma 4.3, \cite{OP}) There exists $c\in(0,\infty)$ such that for $\tau,\tau'\in[0,1]$, $\forall n \geq 0$:
\begin{align*}
\prod_{k=0}^n  \dfrac{\Prob(C^{\tau}_k \cap C^{\tau'}_k)}{\Prob(C_k)^2}  \leq c\dfrac{1}{|\tau-\tau'|^b},
\end{align*}
where $C_k:=C_k^0$ and $b=\log(\sup_k[\Prob(C_k)^{-1}])/\log \gamma >0$.
\end{lem}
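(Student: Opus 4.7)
The plan is to split the product at a critical index $k^*$ chosen so that the two naturally competing bounds (the trivial product bound, which is cheap at small $k$, and the Proposition \ref{dec} bound, which is cheap at large $k$) balance against each other. Concretely, I would take $k^*$ to be the largest integer satisfying $\gamma^{k^*} \leq 1/|\tau-\tau'|$, so that also $\gamma^{k^*+1} > 1/|\tau-\tau'|$. If $n < k^*$ only the first regime appears; otherwise I use both.

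For the low range $0 \leq k \leq k^*$, I would use the trivial inclusion $\Prob(C_k^{\tau} \cap C_k^{\tau'}) \leq \Prob(C_k^{\tau}) = \Prob(C_k)$, giving ratio at most $1/\Prob(C_k)$. Writing $p := \sup_k \Prob(C_k)^{-1}$, which is finite and which defines $b = \log p/\log\gamma$, i.e. $p = \gamma^b$, the product over this range is bounded by
\begin{equation*}
p^{k^*+1} = \gamma^{b(k^*+1)} = \gamma^b \cdot \gamma^{bk^*} \leq \gamma^b \cdot |\tau-\tau'|^{-b}.
\end{equation*}

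For the high range $k > k^*$, I would invoke Proposition \ref{dec} to get
\begin{equation*}
\frac{\Prob(C_k^{\tau} \cap C_k^{\tau'})}{\Prob(C_k)^2} \leq 1 + \frac{c}{\Prob(C_k)^2 \, \gamma^{ak} |\tau-\tau'|^a} \leq 1 + \frac{c'}{\gamma^{ak}|\tau-\tau'|^a},
\end{equation*}
using the uniform lower bound $\inf_k \Prob(C_k) > 0$ to absorb the $\Prob(C_k)^{-2}$. Then $1+x \leq e^x$ reduces the product over $k > k^*$ to an exponential of a geometric sum:
\begin{equation*}
\sum_{k > k^*} \frac{c'}{\gamma^{ak}|\tau-\tau'|^a} \leq \frac{c'}{1-\gamma^{-a}} \cdot \frac{\gamma^{-a(k^*+1)}}{|\tau-\tau'|^a} < \frac{c'}{1-\gamma^{-a}},
\end{equation*}
where the last inequality uses $\gamma^{k^*+1} > 1/|\tau-\tau'|$. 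Hence the high-$k$ part contributes only a multiplicative constant, and combining with the low-$k$ part yields the lemma with $c = \gamma^b \exp(c'/(1-\gamma^{-a}))$.

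The main subtle point is the uniform positivity $\inf_k \Prob(C_k) > 0$, which is what makes $b$ finite and guarantees convergence of the high-$k$ series. After the diffusive rescaling used throughout Section 3, $C_k$ corresponds to two independent random walks started at $\pm d_{k-1}/d_k \to \pm \gamma^{-1}$ staying inside $[-1,1]$ on time $[0,1]$; by Donsker's invariance principle this probability converges to the analogous Brownian probability, which is strictly positive, so $\Prob(C_k)$ is bounded below uniformly in $k$. The other delicate calibration is that $k^*$ is chosen relative to $\gamma$ (not $\gamma^a$), which is precisely what makes the small-$k$ blow-up exponent equal to the desired $b$.
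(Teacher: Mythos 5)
Your proof is correct and follows essentially the same route as the paper: the paper verifies exactly your two ingredients --- the decorrelation bound of Proposition \ref{dec} (inequality \eqref{4.3.1}) and the uniform positivity $\sup_k \Prob(C_k)^{-1}<\infty$ via diffusive scaling to the Brownian rectangle event (inequality \eqref{4.3.2}) --- and then defers to Lemma 4.3 of \cite{OP}, whose argument is precisely your split of the product at the index $k^*$ with $\gamma^{k^*}\leq |\tau-\tau'|^{-1}$, the trivial bound below it and the $1+x\leq e^x$ geometric-sum bound above it. In effect you have written out in full the details the paper delegates to the reference, with the constants correctly tracked.
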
  
This was established in \cite{OP} for a different collection of rectangle events, $A_k$.  The key idea is to split the product at $N_0 := \lfloor -\log(|\tau - \tau'|)/\log(\gamma)\rfloor + 1$.  To make their proof work for $C_k$, we just need $a,c$ such that:
\begin{align}\label{4.3.1}
 \Prob(C^{\tau}_k \cap C^{\tau'}_k) \leq \Prob(C_k)^2 +c\left(\dfrac{1}{\gamma^k |\tau-\tau'|}\right)^a  \;\; \forall \tau,\tau' \in [0,1], k \geq 0,
\end{align}
and:
 \begin{align} \label{4.3.2}
 \sup_k[\Prob(C_k)^{-1}])< \infty.
\end{align}
\eqref{4.3.1} follows from Proposition \ref{dec}.  To see \eqref{4.3.2}, notice that the rectangles $R_k$ scale diffusively, and therefore $\Prob (C_k) \longrightarrow \Prob(C_{\infty})$, the probability of the corresponding rectangle event for Brownian motion paths.  Lemma \ref{4.3} now follows exactly as in \cite{OP}; we'll sketch the main steps here.  Using \eqref{4.3.1} and $\gamma^{N_0}|\tau - \tau'| \geq 1$ (see the definition of $N_0$ above) we can bound:
\begin{align*}
 \prod_{k=N_0+1}^{\infty}  \dfrac{\Prob(C^{\tau}_k \cap C^{\tau'}_k)}{\Prob(C_k)^2}
\end{align*}
by a constant which does not depend on $\tau,\tau'$.  Using the definition of $N_0$ and \eqref{4.3.2} we get:
\begin{align*}
 \prod_{k=0}^{N_0}  \dfrac{\Prob(C^{\tau}_k \cap C^{\tau'}_k)}{\Prob(C_k)^2} \leq c'/|\tau - \tau'|^b
\end{align*}
See \cite{OP} for further details.  A similar argument is given in the proof of Proposition \ref{supdimprop} below. 

Theorem \ref{thm1} now follows as in \cite{OP},\cite{OP3}.  We will repeat their arguments for the sake of completeness.  The Cauchy-Schwartz inequality and Lemma \ref{4.3} give, $\forall n\geq 0$:
\begin{align}
\Prob \left( \int_0^1 \prod_{k=0}^n \1_{C^{\tau}_k} d\tau >0 \right) 
\geq& \dfrac{\left(\E \left[\int_0^1 \prod_{k=0}^n \1_{C^{\tau}_k} d\tau \right] \right)^2 }
{\E \left[ \left( \int_0^1 \prod_{k=0}^n \1_{C^{\tau}_k} d\tau \right)^2 \right] }  \label{thmp1} \\
=& \left[ \int_0^1 \int_0^1 \prod_{k=0}^n  \dfrac{\Prob(C^{\tau}_k \cap C^{\tau'}_k)}{\Prob(C_k)^2} d\tau d\tau' \right]^{-1} \label{thmp2} \\
\geq& c^{-1} \left[  \int_0^1 \int_0^1 \dfrac{1}{|\tau-\tau'|^b} d\tau d\tau' \right]^{-1} , \label{thmp3} 
\end{align}
where \eqref{thmp2} comes from the independence of the arrow configurations in different $R_k$'s and the stationarity of $\tau \longrightarrow \mathcal{W}(\tau)$.  We would like to show that \eqref{thmp3} is strictly positive.  Lemma \ref{4.3} gave:
\begin{align*}
b=&\log(\sup_k[\Prob(C_k)^{-1}])/\log \gamma. 
\end{align*}
Recall that the $R_k$'s, and thus the $\Prob(C_k)$'s, depend on $\gamma$ (see the beginning of Section \ref{struct} for the definition of $R_k$).  As $\gamma$ increases, $R_0$ remains the same, while for $k\geq 1$, $R_k$ scales diffusively.  The size of $R_{k-1}$ relative to $R_{k}$ also tends to zero, so the starting points of $X_{l_k}^{\tau},X_{r_k}^{\tau}$ converge to the center of the rectangle when diffusively rescaled.  This implies that as $\gamma$ goes to infinity, $\sup_k[\Prob(C_k)^{-1}]$ converges to $\max \{\Prob(C_0)^{-1}, \Prob(C^*)^{-1}\}$, where $C^*$ is the rectangle event for two independent Brownian motions started in the center.  So for $\gamma$ sufficiently large we have $b<1$, and thus $|\tau-\tau'|^{-b}$ integrable on $[0,1]\times[0,1]$.  \eqref{thmp1}-\eqref{thmp3} then imply:
\begin{align}\label{thmp4}
\inf_n \Prob \left( \int_0^1 \prod_{k=0}^n \1_{C^{\tau}_k} d\tau >0 \right) \geq p >0.
\end{align}
Letting $E_n:=\{ \tau \in [0,1] : \; \bigcap_{k=0}^n C_k^{\tau} \text{ occurs}\}$, \eqref{thmp4} then implies $\Prob ( \bigcap_{n=0}^{\infty} \{E_n \neq \emptyset \} ) \geq p>0$.  Notice that the $E_n$ are decreasing in $n$.  So if the $E_n$ were closed, this would imply $\Prob ( (\bigcap_{n=0}^{\infty} E_n) \neq \emptyset ) \geq p>0$ and \eqref{C}, and thus Theorem \ref{thm1}, would follow.

Unfortunately, the $E_n$ are not closed.  This is handled as in \cite{OP},\cite{OP2}(Lemma 3.2) by noting that the $E_n$ are nested collections of intervals, and their endpoints must correspond to clock rings at some site in $\mathcal{W}$ or $\hat{\mathcal{W}}$.  The dynamics are stationary and there are only countably many clock rings. Furthermore, the locations of the clock rings at a given site are independent of all other arrows' directions, and when a site's clock rings its arrow is re-sampled according to an independent, $p=1/2$ Bernoulli random variable.  So conditioned on $\tau$ being a clock ring both $\mathcal{W}(\tau)$ and $\hat{\mathcal{W}}(\tau)$ are distributed as DW's.  An important consequence of this discussion is that if an event has probability zero of occurring in a pair of DW's, then there is probability zero that this event will occur at any clock ring for the pair of DyDW's ($\mathcal{W}$, $\hat{\mathcal{W}}$). This will imply that, almost surely:
\begin{align} \label{thmp5}
\bigcap_{n=0}^{\infty} E_n = \bigcap_{n=0}^{\infty} \bar{E}_n .
\end{align} 
To see this, consider the event that \eqref{thmp5} fails, i.e., the event that there exists a $\tau^*$ that is in the right set but not the left set.  Then for some $m \geq 0$, $\tau^*$ is in $\bar{E}_m$, but not $E_m$.  This implies that $\tau^*$ is the right endpoint to an interval from $E_m$ (by right continuity), and thus must be a switching time for exactly one arrow, $\xi^{\tau}_*$, from $\mathcal{W}$ or $\hat{\mathcal{W}}$.  Now, since the $E_n$ are nested and $\tau^*$ is in $\bar{E}_n$ for all $n$, $\tau^*$ must also be a right endpoint to an interval from $E_n$ (but not in $E_n$) for all $n\geq m$.  This means that for all $n \geq m$ there is an $\epsilon>0$ (which depends on $n$) such that $\bigcap_{k=0}^n C_k^{\tau}$ occurs for $\tau \in [\tau^*-\epsilon, \tau^*)$, but not at $\tau = \tau^*$.  However, $\bigcap_{k=0}^n C_k^{\tau}$ ceases to occur at $\tau = \tau^*$ only due to the resetting of $\xi^{\tau}_*$ (almost surely, since with probability one no two arrows will switch at the same time).  This means that by switching the value of $\xi^{\tau^*}_*$ we will cause $\bigcap_{k=0}^{n} C_k^{\tau^*}$ to occur for all $n$, i.e. $\bigcap_{k=0}^{\infty} C_k^{\tau^*}$ will occur.  In other words, $\xi^{\tau^*}_*$ is pivotal for $\bigcap_{k=0}^{\infty} C_k^{\tau^*}$, where ``pivotal" means that the event occurs for one value of $\xi^{\tau^*}_*$ but not the other.  However, the event that an arrow is pivotal for $\bigcap_{k=0}^{\infty} C_k$ has probability zero to occur in a pair of DW's, since $\bigcap_{k=0}^{\infty} C_k$ itself has probability zero and the probability that an arrow is pivotal for an event is bounded by twice the probability of the event.  Also, $\tau^*$ is a switching time and thus also a clock ring.  So based on the discussion above, there is probability zero that $\xi^{\tau^*}_*$ (or any arrow) is pivotal for $\bigcap_{k=0}^{\infty} C_k^{\tau*}$.  We've shown that the existence of a $\tau^*$ that is in the right set of \eqref{thmp5} but not the left set implies the occurrence of a probability zero event, i.e. the event $\{\eqref{thmp5} \text{ fails}\}$ has probability zero.  This proves that \eqref{thmp5} holds almost surely and Theorem \ref{thm1} then follows from the discussion in the previous paragraph.
\end{section}

\begin{section} {Proof of Theorems \ref{thm2} and \ref{thm2sup}}\label{st2}$\quad$ 
In this section we prove Theorem \ref{thm2}, which states that for $C>0$ sufficiently small we have:
\begin{align*}
\Prob \left(\exists \tau \in [0,1] \text{ s.t. } \limsup_{t \to \infty} \frac{S_0^{\tau}(t)}{\sqrt{t \log(t)}} \geq C\right)=1.
\end{align*}
We begin by defining new rectangle events, $\hat{A}_k^{\tau}$.  The new rectangles, $\hat{R}_k$, will be similar to the $R_k$ defined in Section \ref{struct} (see Figure \ref{fig: 2}), but they will be wider and grow faster in $k$.  We will let $\gamma>1$ and define:
\begin{align*}
\hat{d}_k=2(\lfloor\frac{\gamma^{\gamma^k}}{2}\rfloor+1).
\end{align*}
It is possible to prove Theorem \ref{thm2} using the same $d_k$ from Section \ref{struct}.  However, the faster growth of the $\hat{d}_k$ is necessary for the results of Sections \ref{2sidesup} and \ref{supdim}.

Now let $C>0$ and introduce $\hat{w}_k=2(\lfloor C\sqrt{\log(\hat{d}_k^2)\hat{d}_k^2}/2\rfloor+1)$.  Take $\hat{R}_0$ to be the rectangle with vertices $(\hat{w}_0,0)$, $(-\hat{w}_0,0)$, $(\hat{w}_0,\hat{d}_0^2)$ and $(-\hat{w}_0,\hat{d}_0^2)$.  $\hat{R}_{k+1}$ will be the rectangle of width $2\hat{w}_{k+1}$ and height $\hat{d}_{k+1}^2$, stacked on top of $\hat{R}_k$ and centered about the $t$-axis.  Let $\hat{l}_k,\hat{r}_k$ be the upper left and upper right corners of $\hat{R}_{k-1}$.  $\hat{t}_k$ will be the time coordinate of the lower edge of $\hat{R}_k$.  Then for $k\geq1$ we define:
\begin{align*}
\hat{A}_k^{\tau}:=\{S_{\hat{l}_k}^{\tau}(\hat{t}_{k+1}) > \hat{w}_k\}.
\end{align*}
Notice that on $\hat{A}_k^{\tau}$ we must have either:
\begin{align*}
S_0^{\tau}(\hat{t}_k)<-\hat{w}_{k-1}
 \text{ or }
S_0^{\tau}(\hat{t}_{k+1})>\hat{w}_k.
\end{align*}
Now $\hat{w}_k\geq C_k' \sqrt{\log(\hat{t}_{k+1})\hat{t}_{k+1}}$ with $C_k'<C$.   Furthermore, $\hat{t}_{k+1} / \hat{d}_k^2 \to 1$ when $k \to \infty$, so we may choose $C_k'$ such that $C_k' \to C$.  So for a given $\tau$, if $\hat{A}_k^{\tau}$ occurs for infinitely many $k$ this will imply:
\begin{align}
\limsup_{t \to \infty} \frac{S_0^{\tau}(t)}{\sqrt{t \log(t)}} \geq C
 \text{ or }
\liminf_{t \to \infty} \frac{S_0^{\tau}(t)}{\sqrt{t \log(t)}} \leq -C.
\end{align}
Using symmetry we could then say that both types of exceptional times must in fact exist.  So our strategy for proving the existence of superdiffusive exceptional times will be to show that for $C$ small, we have:
\begin{equation} \label{Ahat}
\Prob \left(\exists \tau \in [0,1] \text{ s.t. } \hat{A}_k^{\tau}\text{ occurs infinitely often}\right) =1.
\end{equation}

To begin, we define $\hat{E}_k:= \{\tau \text{ s.t. }\hat{A}_k^{\tau}\}$ and examine $\Prob (\hat{E}_k \cap [a,b]\neq \emptyset)$.

\begin{prop} \label{cl1}
For $C$ sufficiently small,
\begin{align}
\Prob (\hat{E}_k \cap [a,b] \neq \emptyset) \to 1
\end{align}
as $k \to \infty$ for all $a < b$ in $[0,\infty)$.
\end{prop}

This Proposition implies Theorem \ref{thm2}.  Notice that the $\hat{E}_k$ are independent, and each $\hat{E}_k$ is a disjoint union of half open intervals.  Let $[a_0,b_0]=[0,1]$.  Proposition \ref{cl1} implies $\Prob (\hat{E}_k \cap [a_0,b_0] \neq \emptyset)  \to 1$ as $k \to \infty$.  So, almost surely we will have some (random) $k_1$ such that $\hat{E}_{k_1} \cap [a_0,b_0] \neq \emptyset$ and we can choose $[a_1,b_1] \subset \hat{E}_{k_1} \cap [a_0,b_0]$.  By independence we then have $\Prob (\hat{E}_k \cap [a_1,b_1] \neq \emptyset) \to 1$ as $k \to \infty$.  Then almost surely for some $k_2>k_1$, we'll have $[a_2,b_2] \subset \hat{E}_{k_2} \cap [a_1,b_1]$.  Continuing in this manner we get a random nested sequence of non-empty closed intervals, $\{[a_k,b_k]\}_{k\geq 0}$.  Furthermore,  $\bigcap_{k\geq 0}[a_k,b_k] \neq \emptyset$ almost surely, and for $\tau \in \bigcap_{k\geq 0}[a_k,b_k]$ we know that $\hat{A}_k^{\tau}$ occurs for infinitely many $k$.  This proves \eqref{Ahat}, and thus Theorem \ref{thm2}.

Now let $\hat{\Delta}:=\dfrac{1}{\hat{d}_k |\tau-\tau'|}$.  To prove Proposition \ref{cl1} we need the following decorrelation bound, which is the natural analog of Proposition 3.1 from \cite{OP}, and Proposition \ref{dec} above.  The decorrelation bound is essentially the same as in the subdiffusive case, but we will need to use it in a different way to account for the fact that $\Prob(\hat{A}_k^{\tau}) \to 0$.
\begin{lem}\label{hdec}
 There exist $c',a'\in (0,\infty)$ such that:
 \begin{center}
$\Prob(\hat{A}_k^{\tau} \cap \hat{A}_k^{\tau'}) \leq \Prob(\hat{A}_k^0)^2 + c'\left(\hat{\Delta} \right)^{a'} \leq \Prob(\hat{A}_k^0)^2 + c'\left(\dfrac{1}{\gamma^{\gamma^k} |\tau-\tau'|}\right)^{a'}\!\!$, 
 \end{center}
 with $a',c'$ independent of $ k, \tau$ and $\tau'$.
\end{lem}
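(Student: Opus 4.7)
The plan is to follow the proof of Proposition \ref{dec} closely, adapted to the simpler setting of a single path $S_{\hat{l}_k}^{\tau}$ evaluated at the single time $t_{k+1}$, rather than a pair of paths on a time interval. First, translate time so that the path begins at $t=0$, and let $\tilde{S}^{\tau}(t) := S_{\hat{l}_k}^{\tau}(t_k + t d_k^2)/d_k$ denote its diffusive rescaling. The only way $S_{\hat{l}_k}^{\tau}$ and $S_{\hat{l}_k}^{\tau'}$ can stick at an integer time $n$ is that both occupy the same site $z$ and the clock at $z$ has not rung in $[\tau,\tau']$. With this (unique) notion of sticking time, define $g, G, \bar{G}$ exactly as in Section 3, yielding
\[
 S^{\tau}(t) = S_d^{\tau}(G(t)) + S_s^{\tau}(t - G(t)), \qquad \tilde{S}^{\tau}(t) = \tilde{S}_d^{\tau}(\bar{G}(t)) + \tilde{S}_s^{\tau}(t - \bar{G}(t)),
\]
where $S_d^{\tau}$ uses only non-sticking arrows (extended as an independent random walk beyond $G(t_{k+1} - t_k)$ if needed) and $S_s^{\tau}$ uses only sticking arrows. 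Since non-sticking increments always use independently sampled arrows at $\tau$ and $\tau'$, the processes $S_d^{\tau}$ and $S_d^{\tau'}$ are independent simple symmetric random walks. Consequently the decoupled event $\hat{A}_{k,d}^{\tau} := \{S_d^{\tau}(t_{k+1} - t_k) > w_k\}$ satisfies $\Prob(\hat{A}_{k,d}^{\tau} \cap \hat{A}_{k,d}^{\tau'}) = \Prob(\hat{A}_k^0)^2$.

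Introduce the $r$-approximation $\{\hat{A}_{k,d}^{\tau} + r\} := \{\tilde{S}_d^{\tau}(1) > w_k/d_k - r\}$. Three ingredients then drive the argument. First, an approximation bound $\Prob(\{\hat{A}_{k,d}^{\tau} + r\} \setminus \hat{A}_{k,d}^{\tau}) \leq c\, r$ uniformly in $k$: this set equals $\{w_k - r d_k < S_d^{\tau}(t_{k+1}-t_k) \leq w_k\}$, and the local CLT gives $\Prob(S_d^{\tau}(t_{k+1}-t_k) = j) \leq c/d_k$ uniformly in $j$, so summing over lattice points in the interval yields the bound (the $O(1/d_k)$ discretization error is harmless since $1/d_k \leq \Delta$). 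Second, a sticking-time bound $\Prob(\sup_{t\in[0,1]}(t - \bar{G}(t)) \geq \Delta^{\beta}) \leq c''\, \Delta^{1-\beta}$, which is Lemma 3.4 of \cite{OP} applied directly: the sticking here is precisely of the ``$ll$''-type handled in case (a) of the proof of Lemma \ref{3.4}, so no factor-of-$4$ loss is incurred. Third, Lemma \ref{3.5} applies verbatim to the simple symmetric random walks $\tilde{S}_d^{\tau}$ and $\tilde{S}_s^{\tau}$.

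With these in hand, fix $0 < \alpha' < 1/2$ and $0 < \beta < 1$ with $\beta/2 > \alpha'$, and decompose
\[
 \Prob(\hat{A}_k^{\tau} \cap \hat{A}_k^{\tau'}) \leq \Prob(\{\hat{A}_{k,d}^{\tau} + \Delta^{\alpha'}\} \cap \{\hat{A}_{k,d}^{\tau'} + \Delta^{\alpha'}\}) + 2\Prob(\hat{A}_k^{\tau} \setminus \{\hat{A}_{k,d}^{\tau} + \Delta^{\alpha'}\}).
\]
By independence of the decoupled events together with the approximation bound, the first term is at most $\Prob(\hat{A}_k^0)^2 + c\,\Delta^{\alpha'}$. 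For the second, $\hat{A}_k^{\tau} \setminus \{\hat{A}_{k,d}^{\tau} + \Delta^{\alpha'}\}$ forces $|\tilde{S}^{\tau}(1) - \tilde{S}_d^{\tau}(1)| > \Delta^{\alpha'}$; splitting this difference as $[\tilde{S}_d^{\tau}(\bar{G}(1)) - \tilde{S}_d^{\tau}(1)] + \tilde{S}_s^{\tau}(1 - \bar{G}(1))$ and applying the sticking-time and modulus-of-continuity bounds (exactly as at the end of Section 3) gives $c''\,\Delta^{1-\beta} + c\,\Delta^r$ for any chosen $r$.

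The main obstacle, relative to Proposition \ref{dec}, is verifying that the constants remain \emph{uniform in $k$} despite the rescaled threshold $w_k/d_k \sim \alpha\sqrt{\log(d_k^2)}$ growing with $k$. The single-time nature of $\hat{A}_k^{\tau}$ actually simplifies this: rather than needing distributional control on the running maximum (as in Lemma \ref{3}), only the uniform local CLT bound at the endpoint is required, and its constant is intrinsically independent of $k$. All other ingredients transfer mutatis mutandis from Section 3, yielding a valid $a' > 0$ by taking, e.g., $a' = \min(\alpha', 1 - \beta, r)$.
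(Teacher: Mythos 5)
Your proposal is correct and follows the route the paper itself takes: the paper omits the proof of Lemma \ref{hdec}, stating only that the arguments of Proposition 3.1 of \cite{OP} (i.e., the scheme of Proposition \ref{dec}) carry over, and your write-up is precisely that adaptation, with the sticking/non-sticking decomposition, Lemma 3.4 of \cite{OP}, and the modulus-of-continuity bound used in the same way. The one ingredient you genuinely adapt---replacing the Lemma \ref{3}-type approximation estimate by a uniform local CLT bound at the single terminal time, so the constant stays independent of the growing threshold $w_k/d_k$---is sound and is the natural way to fill in the omitted details.
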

Lemma \ref{hdec} follows from the same arguments used to establish Proposition 3.1 in \cite{OP}.  We presented a modified version of these arguments in the proof of Proposition \ref{dec} above. The $\hat{A}_k^{\tau}$ in Lemma \ref{hdec} only depend on one path in each rectangle, which means we don't have many of the difficulties encountered in Sections  \ref{struct} and \ref{decsec} of this paper.  In fact, the original proof from \cite{OP} goes through without any significant modification.  We will not repeat the proof here, just outline the main steps.  For more details see Section 3 of \cite{OP}.  To prove Lemma \ref{hdec}, we need analogues of Lemmas 3.2-3.5 from \cite{OP}.  Lemma 3.5 (Lemma \ref{3.5} above) is a general result about random walks, and Lemmas 3.2 and 3.4 (which are analogous to Lemma \ref{3.4} above) are just statements about sticking between pairs of paths in the DyDW.  See the discussion following \eqref{B} for an explanation of ``sticking" in the DyDW. Since these lemmas are not specific to the rectangle events they consider, they can be used in their original form.  Lemma 3.3 from \cite{OP} (analogous to Lemma \ref{3} above) is specific to their rectangle events, but the proof only relies on approximating the random walk by a Brownian motion and the boundedness of the density of the normal distribution (see the proof of Lemma \ref{3} for a similar argument).  Combining these lemmas as in \cite{OP} gives the result.  A modified version of this argument was presented above, see the end of Section \ref{decsec}.

Now we prove Proposition \ref{cl1}.  Using the Cauchy-Schwartz inequality as in \eqref{thmp1}-\eqref{thmp2}, we have:
\begin{align}
\Prob (\hat{E}_k \cap [a,b] \neq \emptyset) &\geq 
\Prob \left( \int_a^b \1_{\hat{A}^{\tau}_k} d\tau >0 \right) \nonumber \\
&\geq (b-a)^2\left[ \int_a^b \int_a^b \dfrac{\Prob(\hat{A}^{\tau}_k \cap \hat{A}^{\tau'}_k)}{\Prob(\hat{A}_k)^2} d\tau d\tau' \right]^{-1} \label{h6}
\end{align}
So we need a bound on ${\int_a^b \int_a^b\dfrac{\Prob(\hat{A}^{\tau}_k \cap \hat{A}^{\tau'}_k)}{\Prob(\hat{A}_k)^2} d\tau d\tau'}$ that tends to $(b-a)^2$ as $k \to \infty$.  We split the integral in to two parts:

\begin{enumerate}[(i)]
\item $\{\tau,\tau' \in (a,b) \times (a,b) : \gamma^{\gamma^{k-1}}|\tau-\tau'| \leq 1 \}$
\item $\{\tau,\tau' \in (a,b) \times (a,b) : \gamma^{\gamma^{k-1}}|\tau-\tau'| > 1 \}$
\end{enumerate}
On (i) we use the bound:
\begin{align*}
\dfrac{\Prob(\hat{A}^{\tau}_k \cap \hat{A}^{\tau'}_k)}{\Prob(\hat{A}_k)^2} \leq \dfrac{1}{\Prob(\hat{A}_k)}
\end{align*}
to get:
\begin{align} \label{t2b1}
\int \int_{(i)} \dfrac{\Prob(\hat{A}^{\tau}_k \cap \hat{A}^{\tau'}_k)}{\Prob(\hat{A}_k)^2} d\tau d\tau' \leq \dfrac{1}{\Prob(\hat{A}_k)}\dfrac{2(b-a)}{\gamma^{\gamma^{k-1}}}.
\end{align}
On (ii) we use Lemma \ref{hdec} and $\gamma^{\gamma^{k-1}}|\tau-\tau'| > 1$ to get:
\begin{align*}
\dfrac{\Prob(\hat{A}^{\tau}_k \cap \hat{A}^{\tau'}_k)}{\Prob(\hat{A}_k)^2} \leq 1 + \dfrac{c'}{\gamma^{\gamma^{k}a'}|\tau-\tau'|^{a'} \Prob(\hat{A}_k)^2}\leq 1+ \dfrac{c'}{\gamma^{(\gamma^k-\gamma^{k-1})a'} \Prob(\hat{A}_k)^2}
\end{align*}
So:
\begin{align} \label{t2b2}
\int \int_{(ii)} \dfrac{\Prob(\hat{A}^{\tau}_k \cap \hat{A}^{\tau'}_k)}{\Prob(\hat{A}_k)^2} d\tau d\tau' \leq (b-a)^2\left[1+ \dfrac{c'}{\gamma^{(\gamma^k-\gamma^{k-1})a'} \Prob(\hat{A}_k)^2}\right]
\end{align}

Now, $\Prob(\hat{A}_k)$ is the probability that a random walk started at $-\hat{w}_{k-1}$ exceeds $\hat{w}_k$ after $\hat{d}_k^2$ steps.  If we let $\epsilon_k = \hat{w}_{k-1} / \hat{w}_k$ then this is the same as the probability that a random walk started at $0$ exceeds $(1+\epsilon_k)\hat{w}_k$ after $\hat{d}_k^2$ steps.  We will bound this probability by arguing that the random walk is closely approximated by a Brownian motion and then using standard bounds on the tail of a normal distribution.  To accomplish this we'll use the main result of \cite{OP13}.  For simplicity of notation we'll consider simple symmetric random walks, $S(\cdot)$, started at $x=0$, $t=0$. $S_{\delta}(\cdot)$ will denote the diffusive rescaling of such a path by $\delta$, i.e. $S_{\delta}(t):=S(t/\delta^2)\delta$. The main theorem in \cite{OP13} says that there exists a Brownian motion, $B(\cdot)$, and a sequence of rescaled random walks, $\{ S_{\delta}(\cdot) \}_{\delta > 0}$, such that for any $\alpha < 1/2$ $\Prob(|S_{\delta}-B|_{\infty}>\delta^{\alpha})$ decays faster than any power of $\delta$ (where the $|\cdot|_{\infty}$ norm is restricted to $[0,1]$).  Then by taking $\delta = 1/\hat{d}_k$ and $\alpha=1/3$ we have:
\begin{align}
\Prob(\hat{A}_k) =& \Prob\left(S(\hat{d}_k^2) > (1+\epsilon_k)\hat{w}_k \right)\nonumber\\
\geq& \Prob\left(S_{1/\hat{d}_k}(1) > (1 + \epsilon_k) C\left(\sqrt{\log(\hat{d}_k^2)} + c/\hat{d}_k \right)\right) \nonumber\\
\geq& \Prob \left( B(1) > (1 + \epsilon_k) C\left(\sqrt{\log(\hat{d}_k^2)} + c/\hat{d}_k \right) +(1/\hat{d}_k)^{1/3}\right) \label{Pahat1} \\
& - \Prob\left(|S_{1/\hat{d}_k}- B|_{\infty} > (1/\hat{d}_k)^{1/3} \right). \label{Pahat2} 
\end{align}

Absorb the $1/\hat{d}_k$ terms from \eqref{Pahat1} in to $\epsilon_k$, and use the Theorem from \cite{OP13} to bound \eqref{Pahat2} by $(1/\hat{d}_k)^{1000C^2}$.  This gives, for $k$ sufficiently large:
\begin{align} 
\Prob(\hat{A}_k) \geq & \frac{K}{\sqrt{\log(\hat{d}_k^2)}} \exp \left[-\left((1+\epsilon_k')C\sqrt{\log(\hat{d}_k^2)}\right)^2/2\right] - (1/\hat{d}_k)^{1000C^2} \nonumber \\
\geq& \frac{K'}{\sqrt{\log(\gamma^{2\gamma^k}))}} (\gamma^{\gamma^k})^{-(1+\epsilon_k')^2C^2} \nonumber \\
\geq & K'' (\gamma^{\gamma^k})^{-(1+\epsilon_k'')^2C^2}. \label{supprobbnd}
\end{align}
Now $\epsilon_k'' \to 0$, so for $C$ sufficiently small the right hand side of \eqref{t2b1} converges to $0$ and the right hand side of \eqref{t2b2} converges to $(b-a)^2$ as $k\to \infty$.  This gives the bound needed in \eqref{h6}, which completes the proof of Theorem \ref{thm2}.  

\begin{subsection}{Two-Sided Superdiffusivity}\label{2sidesup} $\quad$

As in the subdiffusive case, one can obtain a two-sided version of this result.  Theorem \ref{thm2sup} states that for $C>0$ sufficiently small,
\begin{align*}
\Prob \left(\exists \tau \in [0,1] \text{ s.t. } \limsup_{t \to \infty} \frac{S_0^{\tau}(t)}{\sqrt{t \log(t)}} \geq C \text{ and } \liminf_{t \to \infty} \frac{S_0^{\tau}(t)}{\sqrt{t \log(t)}} \leq -C\right)=1.
\end{align*}

This follows from a straightforward extension of the results above, combining the reasoning from Sections \ref{struct} and \ref{decsec} with the bounds in this section.  We just need to consider rectangle events with two paths in each rectangle that trap the path from the origin in to a zig-zag pattern.  This is illustrated in figure \ref{fig: 3}. 
\begin{figure}[t!!!]
\begin{center}
 \includegraphics[scale=1]{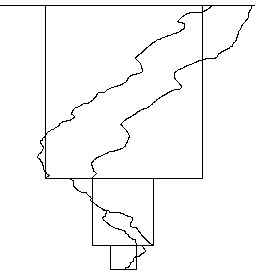}
\end{center}
 \caption{Rough sketch of the first three rectangles and paths for which the $\hat{B}_k^{\tau}$ occur.}
 \label{fig: 3}
\end{figure}

A full proof would involve repeating nearly all the arguments of Sections \ref{struct}-\ref{st2} for a new set of rectangle events.  Instead we give only a quick sketch of the proof and leave the details to the reader.  Use the same definitions for $\hat{R}_k, \hat{t}_k, \hat{l}_k,\hat{r}_k$ as above and let $\hat{l}_k^*,\hat{r}_k^*$ denote the lower left and lower right corners of $\hat{R}_k$, respectively.  Then let $\hat{B}_0^{\tau}=\{S^{\tau}_0(\hat{t}_1) \in [\hat{w}_0,\hat{w}_1]\}$, and for $k\geq 1$:
\begin{align*}
\hat{B}_{2k-1}=& \{S^{\tau}_{\hat{r}_k}(\hat{t}_{2k}) \in [-\hat{w}_{2k},-\hat{w}_{2k-1}],S^{\tau}_{\hat{r}_k^*}(\hat{t}_{2k}) \in [-\hat{w}_{2k},-\hat{w}_{2k-1}]\} \\
\hat{B}_{2k}=& \{S^{\tau}_{\hat{l}_k}(\hat{t}_{2k+1}) \in [\hat{w}_{2k},\hat{w}_{2k+1}],S^{\tau}_{\hat{l}_k^*}(\hat{t}_{2k+1}) \in [\hat{w}_{2k},\hat{w}_{2k+1}]\}
\end{align*}
One can check that if $\bigcap_{k=0}^{\infty} \hat{B}_k^{\tau}$ occurs then $S_0^{\tau}$ will be superdiffusive in both directions, see figure \ref{fig: 3}.

To handle two paths in each rectangle, we then consider analogous rectangle events, $\hat{C}_k^{\tau}$, for a larger system in which paths do not coalesce, as in Section \ref{struct}.  Using the techniques from Sections \ref{struct} and \ref{decsec} we can get a decorrelation bound analogous to Lemma \ref{hdec} above.  For $C$ sufficiently small we can then get an integrable bound on:
\begin{align}\label{2supprod}
\prod_{k=0}^n  \dfrac{\Prob(\hat{C}^{\tau}_k \cap \hat{C}^{\tau'}_k)}{\Prob(\hat{C}_k)^2}
\end{align}
using a similar strategy as in Section \ref{supdim}.  To see this, first notice that:
\begin{align*}
\Prob(\hat{C}^{\tau}_k) \geq& \Prob\left(S_0(\hat{d}_k^2) \in [2\hat{w}_k,\hat{w}_k + \hat{w}_{k+1}] \right)^2 \\
\geq& \hat{K}\Prob\left(S_0(\hat{d}_k^2) \geq 2\hat{w}_k\right)^2 \\
\geq& \hat{K}'(\gamma^{\gamma^k})^{-(1+\hat{\epsilon}_k)^2 8C^2}.
\end{align*}
This follows from the same arguments used to bound $\Prob(\hat{A}_k)$ just before this subsection. 

Now, using the analog of Lemma \ref{hdec} for $\hat{C}^{\tau}_k$ and the same arguments as in the proof of Proposition \ref{supdimprop}, we have:
\begin{align*}
\prod_{k=0}^n  \dfrac{\Prob(\hat{C}^{\tau}_k \cap \hat{C}^{\tau'}_k)}{\Prob(\hat{C}_k)^2} \leq& \hat{K}'' \left( \dfrac{1}{|\tau-\tau'|} \right)^{\frac{16 \gamma^2 (1+\hat{\epsilon}'')^2C^2}{(\gamma-1)}},
\end{align*}
provided that $\hat{a}'(1-\gamma^{-3/2}) - 16C^2 > 0$ ($\hat{a}'$ denotes the value corresponding to $a'$ in the analog of Lemma \ref{hdec}).  This implies that \eqref{2supprod} will be integrable for $C$ sufficiently small.  An application of the Cauchy-Schwartz inequality then implies the existence of two-sided superdiffusive exceptional times, see Section \ref{thm1prf} for a similar argument.

\end{subsection}
\end{section}

\begin{section} {Proof of Theoreom \ref{thm3}}\label{st3}$\quad$
In this section we prove Theorem \ref{thm3}, which says for $C>0$ sufficiently large we have:
\begin{align*}
\Prob \left(\exists \tau \in [0,1] \text{ s.t. } \limsup_{t \to \infty} \frac{S_0^{\tau}(t)}{\sqrt{t \log(t)}} \geq C\right)=0.
\end{align*}
This is a natural counterpoint to Theorem \ref{thm2}, showing that there do not exist exceptional times where the paths are substantially larger.  The proof is inspired by \cite{OP3}, Theorem 8.1.  Let $\gamma>1$ and $d_k=2(\lfloor\frac{\gamma^k}{2}\rfloor+1)$, $t_k = d_0^2 + d_1^2 + ... + d_{k-1}^2$ as in Section \ref{struct}, and let $w_k=2(\lfloor \alpha\sqrt{\log(d_k^2)d_k^2}/2\rfloor+1)$.  We do not use the $\hat{d}_k,\hat{w}_k$ from Section \ref{st2} because we need $t_k$ and $t_{k+1}$ to be of the same order for the following arguments.  We begin by introducing events $\Upsilon^{\tau}_k$:
\begin{align*}
\Upsilon^{\tau}_k := \left\lbrace \underset{t\in [t_{k},t_{k+1}]}{\sup} S_0^{\tau}(t)-S_0^{\tau}(t_{k}) \geq w_k \right\rbrace.
\end{align*}
Suppose that $\Upsilon^{\tau}_k$ occurs for only finitely many $k$.  It then follows that we must have:
\begin{align}\label{t3a}
\limsup_{t \to \infty}\frac{S_0^{\tau}(t)}{\sqrt{t\log(t)}} \leq C,
\end{align}
for some $C>\alpha$.  To see this, assume $\Upsilon^{\tau}_k$ does not occur for $k\geq K$.  Then for $t \in [t_n,t_{n+1}]$ with $n>K$ we have:
\begin{align*}
S_0^{\tau}(t) \leq& (S_0^{\tau}(t_K)-S_0^{\tau}(0)) + (S_0^{\tau}(t_{K+1})-S_0^{\tau}(t_K)) + \dots \\
&+(S_0^{\tau}(t)-S_0^{\tau}(t_n)) \\
\leq&  (S_0^{\tau}(t_K)-S_0^{\tau}(0)) + \sum_{k=K}^{n}w_k \\
\leq&  (S_0^{\tau}(t_K)-S_0^{\tau}(0)) + cw_{n} \\
\leq&  (S_0^{\tau}(t_K)-S_0^{\tau}(0)) + c'\alpha\sqrt{ t_{n+1}\log(t_{n+1})} \\
\leq&  (S_0^{\tau}(t_K)-S_0^{\tau}(0))  + c''\alpha\sqrt{t\log(t)}
\end{align*}
In the last line we used $t\geq t_n \geq (t_n/t_{n+1})t_{n+1}$, and the fact that $t_n/t_{n+1}$ is bounded away from $0$.  \eqref{t3a} then follows easily.

So if for some $\alpha$ we can show $\Prob \left( \exists \tau \in [0,1] \text{ s.t. } \Upsilon^{\tau}_k \right)$ is summable in $k$ we will have:
\begin{align}\label{upeq}
\Prob \left( \exists \tau \in [0,1] \text{ s.t. } \Upsilon^{\tau}_k \text{ occurs i.o.}\right)=0.
\end{align}
Theorem \ref{thm3} then follows from \eqref{t3a}, \eqref{upeq}.  All that is left is to show is:
\begin{clm}
For $\alpha$ sufficiently large $\Prob \left( \exists \tau \in [0,1] \text{ s.t. } \Upsilon^{\tau}_k \right)$ is summable in $k$.
\end{clm}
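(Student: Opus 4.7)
The plan is to split $\Prob\left(\exists \tau \in [0,1] : \Upsilon^\tau_k\right)$ into a \emph{static} contribution at $\tau=0$ and a \emph{dynamic} contribution coming from the upward transitions of the piecewise-constant indicator $f(\tau) := \1_{\Upsilon^\tau_k}$. Since $f$ changes value only when a clock rings at one of the arrows used to determine $S_0^\tau$ on $[0,T_k]$, and only finitely many such rings occur in $[0,1]$ almost surely, $f$ has only finitely many jumps. Writing $N_k$ for the number of upward (false-to-true) jumps of $f$ on $[0,1]$ and $p_k := \Prob(\Upsilon^0_k)$, we have the elementary bound
\begin{equation*}
\Prob\left(\exists \, \tau \in [0,1] : \Upsilon^\tau_k\right) \leq p_k + \E[N_k].
\end{equation*}

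For the static term, at any deterministic $\tau$ the increment $s \mapsto S_0^\tau(T_{k-1}+s) - S_0^\tau(T_{k-1})$ is a simple symmetric random walk of length at most $d_k^2$, so the reflection principle together with Hoeffding's inequality gives $p_k \leq 2\exp(-w_k^2/(2d_k^2))$. Substituting $w_k \approx \alpha d_k\sqrt{\log(d_k^2)} = \alpha d_k\sqrt{2k\log\gamma}$ yields $p_k \leq C\gamma^{-\alpha^2 k}$, which is already summable in $k$ for any $\alpha > 0$.

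The heart of the argument is the bound on $\E[N_k]$, which I would obtain via a standard Palm (Mecke) calculation for the Poisson point process of clock rings:
\begin{equation*}
\E[N_k] \leq \sum_e \int_0^1 \Prob\!\left(\text{the ring at } (e,\tau_0) \text{ triggers an up-jump of } f\right) d\tau_0.
\end{equation*}
Under the Palm distribution corresponding to a ring at $(e,\tau_0)$, the direction of $e$ at time $\tau_0^+$ is a fresh independent $\pm 1$, so in particular the conditional probability of $\Upsilon^{\tau_0^+}_k$ (which is necessary for an up-jump) equals $p_k$ by the stationarity of the DyDW in $\tau$. Restricting the sum to arrows capable of influencing $S_0^\tau$ on $[0,T_k]$, namely those lying in the light-cone $\{(x,t) \in \Z^2_{\text{even}} : 0 \leq t \leq T_k,\ |x| \leq t\}$, whose cardinality is at most $CT_k^2$, we obtain $\E[N_k] \leq CT_k^2 p_k \leq C'\gamma^{(4-\alpha^2)k}$. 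Summability in $k$ of $p_k + \E[N_k]$ then follows whenever $\alpha^2 > 4$, establishing the claim.

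The main obstacle is that the light-cone bound on the number of relevant arrows is somewhat wasteful: only the $T_k$ arrows actually traversed by $S_0^\tau$ can be pivotal at any given $\tau$. A sharper Palm estimate of the form $\Prob(\text{ring at }(e,\tau_0) \text{ triggers an up-jump}) \leq C p_k \Prob(e \text{ lies on } S_0^{\tau_0})$, combined with the identity $\sum_e \Prob(e \text{ lies on } S_0^0) = T_k$, would lower the sufficient threshold to $\alpha^2 > 2$, matching the Gaussian tail exponent exactly. Since the claim only demands "$\alpha$ sufficiently large," the cruder light-cone bound is enough.
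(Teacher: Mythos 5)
Your proposal is correct and takes essentially the same route as the paper: the paper bounds $\Prob(\exists\,\tau\in[0,1]:\Upsilon_k^{\tau})$ by the static probability $\Prob(\Upsilon_k^0)\le C d_k^{-\alpha^2}$ plus a first-moment bound on the number of dynamical transitions (clock rings at pivotal sites among the $O(d_k^4)$ relevant sites), which is precisely your $p_k+\E[N_k]$ decomposition with the Mecke/Palm estimate and light-cone count, yielding the same $d_k^{4-\alpha^2}$ bound and the same threshold $\alpha^2>4$. The only differences are cosmetic (the paper counts interval endpoints via pivotality rather than up-jumps via the Palm formula), and your closing remark about sharpening to $\alpha^2>2$ is a nice observation but, as you note, not needed.
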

\begin{proof}
Consider the set of $\tau$ such that $\Upsilon^{\tau}_k$ occurs.  It is a union of disjoint, half-open intervals, let $\upsilon_k$ denote the set of its endpoints in $[0,1]$.  We have:
\begin{align}\label{t3eq}
\Prob \left( \exists \tau \in [0,1] \text{ s.t. } \Upsilon^{\tau}_k \right) &\leq 
\Prob \left( \forall \tau \in [0,1] \; \Upsilon^{\tau}_k \text{ occurs} \right) + 
\Prob \left( \upsilon_k \neq \emptyset  \right).
\end{align}
Now,
\begin{align*}
\Prob \left( \forall \tau \in [0,1] \; \Upsilon^{\tau}_k \text{ occurs} \right) \leq
\Prob \left( \Upsilon^{0}_k  \right).   &
\end{align*}
We can bound $\Prob \left( \Upsilon^{0}_k  \right)$ by approximating the random walk by a Brownian motion.  A similar argument is carried out above to bound $\Prob(\hat{A}_k)$, see the end of Section \ref{st2}. This gives:
\begin{align*}
\Prob \left( \Upsilon^{0}_k  \right) &\leq C_1 \exp\left(-\left(\alpha \sqrt{\log(d_k^2)}\right)^2/2\right)\\
&\leq C_2 d_k^{-\alpha^2}
\end{align*}
which is summable since $d_k \sim \gamma^k$.  So we just need to bound the second term on the right side of \eqref{t3eq}.  Let $\Omega_k$ denote the sites of $\Z^2_{\text{even}}$ which are reachable by $S_0^{\tau}(\cdot)$ for $t \in [0,t_{k+1}]$.  Since the path moves exactly one spatial unit per time unit, $\Omega_k$ is a deterministic triangular region bounded by $(0,0)$, $(-t_{k+1},t_{k+1})$ and $(t_{k+1},t_{k+1})$.  It follows that $|\Omega_k| \leq C_3 t_{k+1}^2 \leq C_4 d_k^4$.  Each endpoint in $\upsilon_k$ comes from an arrow switching at some $x \in \Omega_k$.  So $\upsilon_k =\bigcup_{x \in \Omega_k} \upsilon_k(x)$, where $\upsilon_k(x)$ denotes the endpoints arising from arrow switches at $x$.  Then:
\begin{align*}
\Prob \left( \upsilon_k \neq \emptyset \right) & \leq \E (|\upsilon_k|)\\
&= \sum_{x \in \Omega_k}\E(|\upsilon_k(x)|)\\
&= \sum_{x \in \Omega_k}\E( \text{Number of arrow switches at $x$})\Prob(x \text{ is pivotal for $\Upsilon_k^{\tau}$})\\
&\leq C_5 | \Omega_k | \Prob(\Upsilon_k) \\
&\leq C_6 d_k^4 d_k^{-\alpha^2}.
\end{align*}
A site, $x$, is ``pivotal" for $\Upsilon_k^{\tau}$ if $\Upsilon_k^{\tau}$ occurs for one value of the arrow at $x$, but does not occur for the other value of the arrow at $x$.  The probability that $x$ is pivotal is clearly bounded by twice the probability of the event occurring.  The $\tau$ in the third line is a switching time for $x$, it is omitted in the fourth line because $\tau$ does not affect the probability.  For $\alpha$ sufficiently large $d_k^{4-\alpha^2}$ will be summable, so the proof of the claim, and thus Theorem \ref{thm3}, is complete.  
\end{proof}

\end{section}

\begin{section}{Hausdorff Dimensions of Sets of Exceptional Times} \label{HD}$\quad$
In this section we look at the sets of exceptional times and examine their Hausdorff dimensions.  In Section \ref{tshd} we extend the Hausdorff dimension bounds from \cite{OP} to the sets of two-sided subdiffusive exceptional times and examine the relationship between the dimensions of various related sets of exceptional times.  In Section \ref{supdim} we discuss the sets of superdiffusive exceptional times.  We are able to get a lower bound on the Hausdorff dimension of these sets using similar techniques as in the subdiffusive case.  However, we are not able to get an upper bound at this time.

\begin{subsection}{Two-Sided Subdiffusive Exceptional Times}\label{tshd}

\quad First we look at subdiffusive exceptional times.  We've shown the existence of exceptional times at which $|S_0^{\tau}(t)|$ remains bounded by $j+K\sqrt{t}$ and exceptional times at which $\limsup_{t\rightarrow \infty}|S_0^{\tau}(t)|/\sqrt{t} \leq K$.  The strategy was to show that $S_0^{\tau}$ remained within a stack of diffusively growing rectangles.  The size of the rectangles, and thus the values of $K,j$ in our bounds, were determined by a parameter, $\gamma$.  See the beginning of Section \ref{struct} for the definition of the rectangles, $R_k$, as well as $d_k,t_k$.  This next proposition attempts to capture the relationships between $K,j$ and $\gamma$.

\begin{prop} \label{Kbound}
Let $\sigma_{\gamma}(t)$ denote the right edge of $\bigcup_{k\geq 0}R_k(\gamma)$.  We have:
\begin{align}
\sigma_{\gamma}(t) \leq & 2+ \gamma \sqrt{t} \;\;\;\;\; \text{for all } t\geq 0 \label{Kbound1}, \\
\limsup_{t \rightarrow \infty} \dfrac{\sigma_{\gamma}(t)} {\sqrt{t}}\leq &\sqrt{\gamma^2-1}. \label{Kbound2}
\end{align}
\end{prop}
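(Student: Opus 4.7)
The plan is to reduce the proposition to elementary estimates on $d_k$ and on the cumulative height $t_k:=\sum_{j=0}^{k-1} d_j^2$ of the rectangle stack. First I set up notation: by the construction, on $[t_k, t_{k+1}]$ (with $t_{k+1}=t_k+d_k^2$) the right edge $\sigma_\gamma$ is exactly $d_k$, so it suffices to analyze $d_k$ versus $t_k$. The definition $d_k = 2(\lfloor \gamma^k/2 \rfloor + 1)$ gives the two-sided bound
$$\gamma^k \;\leq\; d_k \;\leq\; \gamma^k + 2,$$
which follows from $\gamma^k/2 - 1 \leq \lfloor \gamma^k/2 \rfloor \leq \gamma^k/2$.

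For \eqref{Kbound1}, I would argue that on $[t_k,t_{k+1}]$ the quantity $2+\gamma\sqrt{t}$ is minimized at $t=t_k$, so it suffices to show $d_k \leq 2+\gamma\sqrt{t_k}$. The case $k=0$ is trivial since $d_0=2$ and $t_0=0$. For $k\geq 1$, drop all but the last term: $t_k \geq d_{k-1}^2 \geq \gamma^{2(k-1)}$. Combined with $d_k-2 \leq \gamma^k$, this yields $\gamma\sqrt{t_k} \geq \gamma\cdot \gamma^{k-1}=\gamma^k \geq d_k-2$, which is exactly \eqref{Kbound1}.

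For \eqref{Kbound2}, I would keep the full geometric sum. Since $d_j\geq \gamma^j$ for all $j\geq 0$,
$$t_k \;=\; \sum_{j=0}^{k-1} d_j^2 \;\geq\; \sum_{j=0}^{k-1}\gamma^{2j} \;=\; \frac{\gamma^{2k}-1}{\gamma^2-1}.$$
Using $d_k \leq \gamma^k+2$, for any $t\in[t_k,t_{k+1}]$,
$$\frac{\sigma_\gamma(t)}{\sqrt{t}} \;\leq\; \frac{d_k}{\sqrt{t_k}} \;\leq\; (\gamma^k+2)\sqrt{\frac{\gamma^2-1}{\gamma^{2k}-1}},$$
and the right-hand side tends to $\sqrt{\gamma^2-1}$ as $k\to\infty$. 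Taking $\limsup_{t\to\infty}$ gives \eqref{Kbound2}.

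There is no real obstacle here; the only mildly delicate point is making sure the floor in the definition of $d_k$ does not harm the lower estimate $d_k\geq\gamma^k$ (it does not), and noting that because $d_k$ is increasing in $k$ the right edge at the interior boundary $t=t_k$ is unambiguously given by $d_k$. Everything else is a routine comparison with a geometric series.
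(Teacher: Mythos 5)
Your proof is correct and follows essentially the same route as the paper: the two-sided bound $\gamma^k \leq d_k \leq \gamma^k+2$ together with the geometric-series lower bound $t_k \geq (\gamma^{2k}-1)/(\gamma^2-1)$, comparing $d_k$ with $\sqrt{t_k}$ on each interval $[t_k,t_{k+1})$. The only (harmless) difference is that for \eqref{Kbound1} you use the cruder bound $t_k \geq d_{k-1}^2 \geq \gamma^{2(k-1)}$ for $k\geq 1$, whereas the paper deduces it from the same estimate it uses for \eqref{Kbound2}; both are fine.
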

\begin{proof}
Recall that $d_k=2(\lfloor\frac{\gamma^k}{2}\rfloor+1)$, so $\gamma^k \leq d_k \leq \gamma^k+2$.  This gives:
\begin{align*}
t_k= d_0^2+d_1^2+...+d_{k-1}^2 \geq \gamma^0 +\gamma^2 + ... +\gamma^{2(k-1)} = \dfrac{\gamma^{2k}-1}{\gamma^2-1}.
\end{align*}
Now, for $t_k \leq t < t_{k+1}$, $k\geq1$ we have:
\begin{align}
\sigma_{\gamma}(t) =& d_k \leq \gamma^k +2 \nonumber \\
\leq & \gamma^k \left(\dfrac{\gamma^{2k}-1}{\gamma^2-1}\right)^{\!\!\!-\frac{1}{2}} \!\!\!\!\!\sqrt{t} +2 =\sqrt{\dfrac{\gamma^2-1}{1-\gamma^{-2k}}}\sqrt{t} + 2, \label{Kbound3}
\end{align}
and \eqref{Kbound2} follows immediately.  To see \eqref{Kbound1}, notice that for $t<t_1$ we have $\sigma_{\gamma}(t)=d_0=2 \leq  2+ \gamma \sqrt{t}$.   For $t\geq t_1$, we see $\sqrt{(\gamma^2-1)/(1-\gamma^{-2k})} \leq \gamma$ when $k\geq 1$, so \eqref{Kbound1} follows from \eqref{Kbound3}.
\end{proof}


Now we'd like to consider various sets of exceptional times.  For non-negative $j\in \Z$, we define:
\begin{align*}
T_j^{\pm}(K):=&\{\tau \in [0,\infty): |S_0^{\tau}(t)| \leq j+K\sqrt{t} \;\;\; \forall t\}, \,\; T_{\infty}^{\pm}(K):= \bigcup_{j\geq0} T_j^{\pm}(K), \\
T_j^{+}(K):=&\{\tau \in [0,\infty): S_0^{\tau}(t) \leq j+K\sqrt{t} \;\;\; \forall t\}, \;\;\; \,T_{\infty}^{+}(K):= \bigcup_{j\geq0} T_j^{+}(K), \\
T_j^{-}(K):=&\{\tau \in [0,\infty): S_0^{\tau}(t) \geq -j-K\sqrt{t} \;\;\; \forall t\}, \; T_{\infty}^{-}(K):= \bigcup_{j\geq0} T_j^{-}(K).
\end{align*}
We are interested in the Hausdorff dimensions of these sets.  As in \cite{OP},\cite{OP2}, ergodicity of the DyDW in $\tau$ implies the dimension of any set of exceptional times will be almost surely constant.  Future discussions of such dimensions should be understood to refer to this constant, and thus may only hold almost surely.  In \cite{OP}(Proposition 5.2) it was shown that in the one-sided case, the Hausdorff dimensions of $T_j^+(K)$ and $T_j^{-}(K)$ do not depend on $j\geq0$.  So:
 \begin{center}$\dim_H (T_0^{+}(K))=\dim_H (T_{\infty}^{+}(K))(=\dim_H (T_j^{+}(K)) \text{ for all }j)$, \\
 $\dim_H (T_0^{-}(K))=\dim_H (T_{\infty}^{-}(K))(=\dim_H (T_j^{-}(K)) \text{ for all }j)$.  \end{center}
Modifying their argument, we obtain:
\begin{prop}\label{jdep}
For $T_j^{\pm}(K)$ as defined above, we have:
\begin{align*}
\sup_{K'<K} \dim_H (T_{\infty}^{\pm}(K'))\leq \dim_H (T_1^{\pm}(K))\leq\dim_H (T_{\infty}^{\pm}(K))\leq& \inf_{K''>K}\dim_H (T_1^{\pm}(K'')). 
\end{align*}
\end{prop}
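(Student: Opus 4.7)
The middle inequality $\dim_H T_1^{\pm}(K) \le \dim_H T_{\infty}^{\pm}(K)$ is immediate from the inclusion $T_1^{\pm}(K) \subseteq T_{\infty}^{\pm}(K)$. For the outer inequalities, countable stability of Hausdorff dimension together with $T_{\infty}^{\pm}(K') = \bigcup_{j \ge 0} T_j^{\pm}(K')$ yields $\dim_H T_{\infty}^{\pm}(K') = \sup_{j \ge 0} \dim_H T_j^{\pm}(K')$. Both outer inequalities then reduce to the single claim that $\dim_H T_j^{\pm}(K_1) \le \dim_H T_1^{\pm}(K_2)$ whenever $K_1 < K_2$ and $j \ge 0$: applying this with $(K_1, K_2) = (K', K)$ and taking $\sup_j$ gives the left inequality, while applying it with $(K_1, K_2) = (K, K'')$ gives the right one.

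My plan to establish this reduced claim is to exploit the strict inequality $K_2 > K_1$ in order to absorb the surplus $j - 1$ at large times. Choose $T^* := \lceil((j-1)_+/(K_2 - K_1))^2\rceil$, so that $j + K_1\sqrt{t} \le 1 + K_2\sqrt{t}$ for all $t \ge T^*$. Then any $\tau \in T_j^{\pm}(K_1)$ automatically satisfies the tighter constraint on $[T^*, \infty)$, and the only obstruction to membership in $T_1^{\pm}(K_2)$ lies in the finite initial segment $[0, T^*]$.

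I would then decompose $T_j^{\pm}(K_1)$ according to the finitely many trajectories $\sigma : \{0, 1, \ldots, T^*\} \to \Z$ that $S_0^{\tau}$ can follow on $[0, T^*]$: set
\begin{align*}
T_{j, \sigma}^{\pm}(K_1) := \{\tau \in T_j^{\pm}(K_1) : S_0^{\tau}(t) = \sigma(t) \text{ for all } 0 \le t \le T^*\}.
\end{align*}
This gives a finite partition, so $\dim_H T_j^{\pm}(K_1) = \max_{\sigma}\dim_H T_{j, \sigma}^{\pm}(K_1)$. For each admissible $\sigma$, by which I mean $|\sigma(t)| \le 1 + K_2\sqrt{t}$ on $[0, T^*]$, the inclusion $T_{j, \sigma}^{\pm}(K_1) \subseteq T_1^{\pm}(K_2)$ is immediate and yields the desired dimension bound.

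The main obstacle is the non-admissible $\sigma$'s: trajectories satisfying the loose bound $j + K_1\sqrt{t}$ but violating the tight bound $1 + K_2\sqrt{t}$ on $[0, T^*]$. For such $\sigma$, I would mimic the strategy of Proposition 5.2 in \cite{OP} and construct a dimension-preserving correspondence between $T_{j, \sigma}^{\pm}(K_1)$ and a subset of $T_1^{\pm}(K_2)$ via arrow surgery in the finite collection of sites visited by $\sigma$. The key structural observation is that $T_{j, \sigma}^{\pm}(K_1) = A_\sigma \cap B_{\sigma(T^*)}$, where $A_\sigma$, the event that the initial trajectory equals $\sigma$, depends only on arrows at the finite set $\{(\sigma(t), t) : 0 \le t < T^*\}$, while $B_{\sigma(T^*)}$, the tail constraint at times $t \ge T^*$, depends on an independent collection of arrows whose Poisson clocks evolve independently in $\tau$. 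By spacetime translation invariance of the DyDW, the dimension of the intersection should be controlled by the tail factor, which is in turn comparable (via an asymmetric offset, cf.\ Remark \ref{skew}) to a $T_{\cdot}^{\pm}(K_2)$-type event. The delicate step is formalising this surgery so that the induced map on $\tau$ is bi-Lipschitz, hence dimension-preserving. The strict inequality $K_1 < K_2$ is indispensable: without it, $T^*$ would be infinite and no finite-site surgery could suffice.
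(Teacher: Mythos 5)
Your reduction matches the paper's: both arguments boil everything down to showing $\dim_H (T_j^{\pm}(K_1))\leq\dim_H (T_1^{\pm}(K_2))$ whenever $K_1<K_2$, using countable stability for $T_\infty^{\pm}=\cup_j T_j^{\pm}$ and the trivial middle inclusion, and both exploit the strict inequality $K_1<K_2$ only to absorb the offset beyond a finite time horizon. The finite decomposition over initial trajectories $\sigma$ and the inclusion $T_{j,\sigma}^{\pm}(K_1)\subseteq T_1^{\pm}(K_2)$ for admissible $\sigma$ are fine, but that part is easy; the whole content of the proposition sits in the non-admissible $\sigma$'s, and there your plan has a genuine gap. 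The fallback ``the dimension is controlled by the tail factor'' is circular: inclusion only gives $\dim_H T_{j,\sigma}^{\pm}(K_1)\leq \dim_H B_{\sigma(T^*)}$, and after translating the starting point of $B_{\sigma(T^*)}$ back to the origin you are facing a $K_1$-subdiffusive set with (asymmetric) constant offsets of order $j+|\sigma(T^*)|+K_1\sqrt{T^*}$, i.e.\ exactly the offset-reduction statement you are trying to prove, now with a \emph{larger} offset, so iterating your decomposition never closes. The ``bi-Lipschitz arrow surgery'' cannot be formalized as stated either: resampling finitely many arrows is a transformation of the underlying probability space, not a map of the dynamical-time axis within a fixed realization, so it induces no pointwise correspondence between the two random exceptional sets whose Hausdorff dimension you could transfer; at best it gives equality in distribution, and converting that into equality of the a.s.-constant dimensions is precisely the nontrivial step you have not supplied.

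The paper closes this gap in the opposite direction: instead of upper-bounding $\dim_H T_j^{\pm}(K_1)$, it lower-bounds $\dim_H T_1^{\pm}(K_2)$ by exhibiting inside it, for $n$ large, the intersection of $\{\tau : |S_0^{\tau}(t)|\leq 1\ \forall t\in[0,2n]\}$ with the translated copy $\{\tau : |S_{(0,2n)}^{\tau}(t)|\leq j+K_1\sqrt{t-2n}\ \forall t\geq 2n\}$ of $T_j^{\pm}(K_1)$. The translated set has the same a.s.\ dimension, the corridor event is a positive-probability event determined by finitely many arrows independent of it, and the key lemma — the ergodicity argument of Proposition 5.2 of \cite{OP} — asserts that intersecting with such an independent corridor does not decrease the Hausdorff dimension. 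That lemma (or an equivalent substitute) is exactly what your proposal is missing; note that with it in hand you would not need the non-admissible $\sigma$'s at all, since you could simply restrict to the single corridor where the walk stays in $\{-1,0,1\}$ up to an even time $\geq T^*$.
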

\noindent The reason we take $j=1$ instead of $j=0$ is to prevent the first step of the walk from pushing $|S^{\tau}_0|$ past $j+K\sqrt{t}$ when $K<1$.  If we are only interested in $K\geq 1$ we can take $j=0$ and obtain analogous bounds involving $T_0^{\pm}(K)$.  Notice that $T_1^{\pm}(K)$ and $T_{\infty}^{\pm}(K)$ are increasing functions of $K$, and thus must be continuous for all but countably many $K$.  So for all but countably many $K$, the inequalities from Proposition \ref{jdep} collapse into equalities, and the dimensions will not depend on $j$.  We would conjecture that at least the center inequality should be an equality for all $K$, giving $j$-independence as in the one-sided case, but we are unable to prove this.  

The proof of Proposition \ref{jdep} is motivated by the proof of Proposition 5.2 from \cite{OP}.  The second inequality is trivial; the first and third follow from the same argument.  To see this, pick any $K_1<K_2,j\geq1$ and notice that:
\begin{align*}
\{\tau : |S^{\tau}_{0}(t)| \leq 1 \text{ for all }t\in[0,2n] \} \cap 
\{\tau : |S^{\tau}_{(0,2n)}(t)| \leq j+K_1\sqrt{t-2n} \text{ for all }t\geq 2n \} 
\end{align*}
is contained in $T_1^{\pm}(K_2)$ for $n$ sufficiently large.  This is because $S^{\tau}_{0}(2n)=0$ on the first set, and $j+K_1\sqrt{t-2n}\leq 1+K_2\sqrt{t}$ for large $n$ (this fails for $K_1=K_2$, which is why we don't get full $j$-independence).  The second set is just a translated version of $T_j^{\pm}(K_1)$, and thus has the same Hausdorff dimension.  The first set consists of $\tau$ at which an independent (of $S^{\tau}_{(0,2n)}$) event of positive probability occurs.  Thus, by the same ergodicity arguments used in Proposition 5.2 from \cite{OP}, intersection with the first set does not decrease the dimension.  So:
\begin{align*}
\dim_H (T_j^{\pm}(K_1))\leq\dim_H (T_1^{\pm}(K_2)) \text{   for all }j\geq 1, K_1<K_2,
\end{align*}
which proves both the first and third inequalities.

Now we focus on comparing the Hausdorff dimensions of $T_{\infty}^{\pm}(K)$ and other, related sets of exceptional times.  We may drop $j$ from the notation, and when $j$ is not specified it should be understood that we are discussing $T_{\infty}^{\pm}(K)$ (i.e., $T^{\pm}(K):=T_{\infty}^{\pm}(K)$).  Proposition \ref{jdep} allows us to translate the coming bounds into bounds for $\dim_H (T_1^{\pm}(K))$ (or $\dim_H (T_0^{\pm}(K))$ for $K\geq1$).

We now consider dynamical times at which $S^{\tau}_0(t)$ displays exceptional behavior as $t$ goes to $\infty$.  That is, we look at times at which \textit{$S^{\tau}_0$ is $K$-subdiffusive in the limit:}
\begin{align*}
\tilde{T}^{\pm}(K):= \{\tau\in[0,\infty) : \limsup_{t\rightarrow\infty}|S_0^{\tau}(t)|/\sqrt{t}\leq K\}.
\end{align*}\vspace{-2mm}
We'd like to relate this set to $T^{\pm}(K)$.  Notice that:
\begin{align}
T^{\pm}(K):=& \{\tau\in[0,\infty) :\exists j\text{ s.t. }|S_0^{\tau}(t)|\leq j+K\sqrt{t}\;\;\;\forall t\} \nonumber\\
=& \{\tau\in[0,\infty) :\exists N,j\text{ s.t. }|S_0^{\tau}(t)|\leq j+K\sqrt{t}\;\;\;\forall t\geq N\}.\label{Tunion}
\end{align}
This is because the first set is clearly contained in the second, and for any $\tau$ in the second set we can simply choose a larger value for $j$ to make the inequality $|S_0^{\tau}(t)|\leq j+K\sqrt{t}$ hold for all $t$.  This implies:
\begin{align*}\vspace{-4mm}
\tilde{T}^{\pm}(K) = \bigcap_{K'>K}T^{\pm}(K'),
\end{align*}\vspace{-2mm}
so:\vspace{-2mm}
\begin{align}\vspace{-2mm} \label{tildehat}
\dim_H(\tilde{T}^{\pm}(K)) = \inf_{K'>K}\dim_H(T^{\pm}(K')).
\end{align}
As in the discussion following Proposition \ref{jdep}, monotonicity in $K$ implies that \eqref{tildehat} will also equal $\dim_H(T^{\pm}(K))$ except for at most countably many $K$.  It may be that there is equality for all $K$, but this does not follow from our arguments. 

In \cite{OP} it was shown that almost surely, for all $\tau$, all walks in the DyDW are recurrent, and all pairs of walks coalesce (see Theorem 2.1, Remark 2.3 from \cite{OP}).  This implies:
\begin{align}
T^{\pm}(K)&=\{\tau\in[0,\infty) :\exists N,j\text{ s.t. }|S_0^{\tau}(t)|\leq j+K\sqrt{t}\;\;\;\forall t\geq N\} \;\;\; \text{(by \eqref{Tunion})}\nonumber\\ 
&\stackrel{\text{a.s.}}{=} \bigcup_{n\geq 0} \{\tau\in[0,\infty) :\exists j \text{ s.t. }|S_{(0,2n)}^{\tau}(t)|\leq j+K\sqrt{t}\;\;\;\forall t\geq 2n\}. \label{thateq}
\end{align}
To see this, notice that on the second set, $S_0^{\tau}$ will a.s. eventually coalesce with $S_{(0,2n)}^{\tau}$, so for $t$ large we will have $|S_0^{\tau}(t)| =|S_{(0,2n)}^{\tau}(t)| \leq j+K\sqrt{t}$.  For $\tau$ in the first set, let $N^*$ be the first time $S_0^{\tau}$ returns to zero after $N$.  Then $|S_{(0,N^*)}(t)| \leq j+K\sqrt{t}\;\;\;\forall t\geq N^*$.  This proves \eqref{thateq}.

Using \eqref{thateq} and the recurrence of all paths, we also get:
\begin{align*}
T^{\pm}(K)\stackrel{\text{a.s.}}{=} \bigcap_{m\geq 0}\bigcup_{n\geq m} \{\tau\in[0,\infty) :\exists j \text{ s.t. }|S_{(0,2n)}^{\tau}(t)|\leq j+K\sqrt{t}\;\;\;\forall t\geq 2n\},
\end{align*}
which is a tail random variable with respect to the underlying $\xi_{(x,t)}^{\tau}$ processes.  Similar reasoning also applies to $\tilde{T}^{\pm}(K)$.  These observations imply:
\begin{align} \label{denseeq}
\Prob(T^{\pm}(K)\cap [0,\epsilon]= \emptyset)= 0 \text{ or } 1 \;\;\;\;\;\text{for all } K>0,\epsilon\geq 0, \nonumber\\
\Prob(\tilde{T}^{\pm}(K)\cap [0,\epsilon]= \emptyset)= 0 \text{ or } 1 \;\;\;\;\;\text{for all } K>0,\epsilon\geq 0.
\end{align}
An easy consequence of \eqref{denseeq} is given by the following proposition:
\begin{prop}
Almost surely, for every $K>0$, $\tilde{T}^{\pm}(K)$, $T^{\pm}(K)$ will each be either empty, or dense in $[0,\infty)$.
\end{prop}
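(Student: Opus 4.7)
The plan is to establish the dichotomy for each fixed $K > 0$ first, and then exploit monotonicity of $K \mapsto \hat{T}^{\pm}(K)$ and $K \mapsto \tilde{T}^{\pm}(K)$ to upgrade to the ``for every $K$'' statement.

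For a fixed $K > 0$, set $p(\epsilon) := \Prob(\hat{T}^{\pm}(K) \cap [0,\epsilon] \neq \emptyset)$. By \eqref{denseeq}, $p(\epsilon) \in \{0,1\}$, and stationarity of $\tau \mapsto \mathcal{W}(\tau)$ gives $\Prob(\hat{T}^{\pm}(K) \cap [a, a+\epsilon] \neq \emptyset) = p(\epsilon)$ for every $a \geq 0$. If $p(\epsilon_*) = 1$ for some $\epsilon_* > 0$, then for any smaller $\epsilon > 0$ covering $[0, \epsilon_*]$ by $\lceil \epsilon_*/\epsilon \rceil$ translates of $[0,\epsilon]$ together with a finite union bound forces $p(\epsilon) = 1$; a countable intersection over rational intervals $(a,b) \subseteq [0,\infty)$ then yields that $\hat{T}^{\pm}(K)$ is dense in $[0,\infty)$ almost surely. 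If instead $p(\epsilon) = 0$ for every $\epsilon > 0$, taking the union over $\{[n, n+1] : n \geq 0\}$ gives $\hat{T}^{\pm}(K) = \emptyset$ almost surely. The same argument, using the $\tilde{T}^{\pm}$ version of \eqref{denseeq}, yields the dichotomy for $\tilde{T}^{\pm}(K)$.

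To upgrade to ``for every $K > 0$ simultaneously,'' set
\[
K^*_{\hat{T}} := \sup\{K > 0 : \hat{T}^{\pm}(K) = \emptyset \text{ almost surely}\},
\]
with $\sup \emptyset := 0$, and define $K^*_{\tilde{T}}$ analogously. Intersect the following countably many almost sure events: $\hat{T}^{\pm}(K) = \emptyset$ for each rational $K < K^*_{\hat{T}}$; $\hat{T}^{\pm}(K)$ is dense for each rational $K > K^*_{\hat{T}}$; the dichotomy holds at the single value $\hat{T}^{\pm}(K^*_{\hat{T}})$; and the three analogous events for $\tilde{T}^{\pm}$ and $K^*_{\tilde{T}}$. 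On this intersection, any real $K \neq K^*_{\hat{T}}$ inherits the $\hat{T}^{\pm}$ conclusion by squeezing between two rationals and applying monotonicity (and analogously for $\tilde{T}^{\pm}$ via $K^*_{\tilde{T}}$), while at the critical values the fixed-$K$ dichotomy applies directly.

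The main obstacle is the uncountable quantifier over $K$: the 0-1 law in \eqref{denseeq} is a statement for each fixed $K$, so a naive union over all $K > 0$ is not permissible. Monotonicity in $K$ is what rescues this, reducing the uncountable family to a countable one plus a single deterministic critical value per set, at which the fixed-$K$ dichotomy is invoked one additional time.
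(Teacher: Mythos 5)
Your proposal is correct and takes essentially the paper's (implicit) route: the fixed-$K$ dichotomy is exactly the ``easy consequence'' of the tail 0--1 law \eqref{denseeq} combined with stationarity of $\tau \mapsto \mathcal{W}(\tau)$, which is all the paper offers. Your extra critical-value/monotonicity step upgrading this to ``almost surely, for every $K>0$'' is a correct and worthwhile refinement of a point the paper glosses over, and it is genuinely needed since one cannot take an uncountable union of null sets and since $\hat{T}^{\pm}(K)$ need not equal $\bigcup_{q<K,\,q\in\mathbb{Q}}\hat{T}^{\pm}(q)$.
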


Now we prove a lower bound for the Hausdorff dimension of $\tilde{T}^{\pm}(K)$.  The above results (Proposition \ref{jdep}, \eqref{tildehat}) allow us to translate the following bound into lower bounds for $\dim_H (T^{\pm}(K))$ and $\dim_H (T_j^{\pm}(K))$.  In fact, our lower bound is continuous in $K$, so we get the same bound for all these sets of exceptional times.  Now, let $\tilde{\gamma}(K):= \sqrt{K^2+1}$, so that:
\begin{align} \label{sigmaK}
\limsup_{t \rightarrow \infty} \dfrac{\sigma_{\tilde{\gamma}(K)}(t)} {\sqrt{t}}\leq K
\end{align}
(see Proposition \ref{Kbound}).  Given $\gamma$, let $C_{\infty}(\gamma)$ be the corresponding rectangle event for Brownian motions; that is, the event that two independent Brownian motions started at $\pm \gamma^{-1}$ stay within $[-1,1]$ for $0\leq t \leq 1$.  Then we have:
\begin{prop}\label{ldim}
\begin{align}
\dim_H(\tilde{T}^{\pm}(K)) \geq 1 - \dfrac{\log \Prob(C_{\infty}(\tilde{\gamma}(K)))^{-1}}{\log \tilde{\gamma}(K)} =: 1-b_{\infty}(K). \label{dimbnd}
\end{align}
\end{prop}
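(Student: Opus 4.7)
The plan is to apply the standard second-moment and Frostman energy method (as in Proposition 5.2 of \cite{OP}) to a family of random measures built from the rectangle events of Section \ref{struct}. Fix $\gamma := \tilde\gamma(K) = \sqrt{K^2+1}$. By \eqref{sigmaK} and the non-crossing of DW paths recalled in Section \ref{struct}, any $\tau$ at which $\bigcap_{k\geq 0} C_k^\tau$ occurs satisfies $\limsup_{t\to\infty}|S_0^\tau(t)|/\sqrt{t} \leq K$ and hence lies in $\tilde T^\pm(K)$. To replace $\sup_k \Prob(C_k)^{-1}$ (the constant arising in Lemma \ref{4.3}) by the Brownian probability $\Prob(C_\infty(\gamma))^{-1}$, I will work with a shifted stack: for each $k_0 \geq 1$, consider the stack obtained by translating $R_{k_0}, R_{k_0+1},\ldots$ to begin at $t=0$, and denote the corresponding rectangle events by $C_k^{(k_0),\tau}$. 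These satisfy $\Prob(C_k^{(k_0)}) = \Prob(C_{k+k_0})$ for $k \geq 1$, the geometric growth rate stays at $\gamma$, and \eqref{sigmaK} still gives the containment $\bigcap_{k\geq 0}\{C_k^{(k_0),\tau}\} \subseteq \tilde T^\pm(K) \cap [0,1]$.

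For $n \geq 0$ define the random measure on $[0,1]$ by
\[
\mu_n(d\tau) := \prod_{k=0}^n \frac{\mathbbm{1}_{C_k^{(k_0),\tau}}}{\Prob(C_k^{(k_0)})}\, d\tau .
\]
The shifted stack has the same aspect ratio, growth rate, and coupled-walk construction as the original, so Proposition \ref{dec} and Lemma \ref{4.3} apply verbatim, giving
\[
\prod_{k=0}^n \frac{\Prob(C_k^{(k_0),\tau}\cap C_k^{(k_0),\tau'})}{\Prob(C_k^{(k_0)})^2} \leq c\,|\tau-\tau'|^{-b^{(k_0)}}, \qquad b^{(k_0)} := \frac{\log \sup_k \Prob(C_k^{(k_0)})^{-1}}{\log \gamma}.
\]
The Cauchy--Schwarz computation \eqref{thmp1}--\eqref{thmp3} together with Paley--Zygmund gives $\Prob(\mu_n([0,1]) > c_1) \geq p > 0$ uniformly in $n$ whenever $b^{(k_0)} < 1$, and for any $\alpha < 1 - b^{(k_0)}$ the expected $\alpha$-energy
\[
\E[I_\alpha(\mu_n)] = \int_0^1\!\!\int_0^1 \frac{\prod_{k=0}^n \Prob(C_k^{(k_0),\tau}\cap C_k^{(k_0),\tau'})/\Prob(C_k^{(k_0)})^2}{|\tau-\tau'|^\alpha}\, d\tau\, d\tau'
\]
is bounded uniformly in $n$.

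By standard weak compactness, on the positive-probability event that $\mu_n([0,1])$ and $I_\alpha(\mu_n)$ are both bounded, a subsequence of $\mu_n$ converges weakly to a random measure $\mu_\infty$ supported on $\bigcap_{k\geq 0}\{C_k^{(k_0),\tau}\} \subseteq \tilde T^\pm(K) \cap [0,1]$ and having finite $\alpha$-energy. Frostman's energy theorem then yields $\dim_H\tilde T^\pm(K) \geq \alpha$ with positive probability, and ergodicity of the DyDW in $\tau$ (noted at the start of this section) upgrades this to an a.s. statement. Since $\Prob(C_k) \to \Prob(C_\infty(\gamma))$ as $k \to \infty$ (as used right after \eqref{4.3.2}), we have $b^{(k_0)} \to b_\infty(K)$ as $k_0 \to \infty$; sending $\alpha \uparrow 1-b^{(k_0)}$ and then $k_0 \to \infty$ produces the claimed bound.

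The main obstacle is the behaviour of the anomalous first event $C_0^{(k_0)}$ of the shifted stack, which is a single-walk confinement event rather than a two-walk event. One must check that for large $k_0$ its probability is at least $\Prob(C_\infty(\gamma))$ (so that $\sup_k \Prob(C_k^{(k_0)})^{-1}$ is attained on the tail and inherits the limit $\Prob(C_\infty(\gamma))^{-1}$), together with the Donsker-type approximation needed to pass from discrete rectangle probabilities to their Brownian analogues uniformly over the shifted stack. Both are essentially routine but must be done carefully to ensure the constants in Proposition \ref{dec} and Lemma \ref{4.3} remain uniform in $k_0$.
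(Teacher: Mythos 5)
Your proposal is correct in substance and rests on the same core machinery as the paper's proof (the decorrelation bound of Proposition \ref{dec}, the Lemma \ref{4.3}-type second-moment estimate, and the Schramm--Steif extension of Frostman's lemma from \cite{OP3}), but it uses a genuinely different device to replace $\sup_k \Prob(C_k)^{-1}$ by its Brownian limit $\Prob(C_{\infty}(\tilde{\gamma}(K)))^{-1}$. You translate the tail $R_{k_0},R_{k_0+1},\dots$ of the stack down to $t=0$, which keeps the product over all $k\geq 0$ and makes the containment in $\tilde{T}^{\pm}(K)$ purely geometric (a small computation with the shifted time coordinates, in the spirit of \eqref{sigmaK}, since only the $\limsup$ matters), at the cost of an anomalous single-walk first event and of having to check uniformity of the constants over the shifted family; your flagged checks do go through -- by translation invariance $\Prob(C_k^{(k_0)})=\Prob(C_{k_0+k})$ for $k\geq 1$ with $d_{k_0+k}\geq \gamma^k$ in the decorrelation bound, and $\Prob(C_0^{(k_0)})$ converges to the confinement probability of a single centered Brownian motion, which dominates $\Prob(C_{\infty}(\tilde{\gamma}(K)))$, so the sup is governed by the tail. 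The paper instead keeps the stack fixed and simply starts the product at index $m$ (measures $\sigma_{n,m}$), so every event in the product is a standard two-walk event with the original geometry and constants, and then invokes the a.s. recurrence and coalescence of all DyDW paths to argue that confinement of the boundary paths from level $m$ onward still forces $\limsup_{t\to\infty}|S_0^{\tau}(t)|/\sqrt{t}\leq K$; this is exactly the point, advertised in the introduction, of working with the tail set $\tilde{T}^{\pm}(K)$. So the trade-off is: your route avoids coalescence but needs the anomalous-event and uniformity checks; the paper's route avoids those checks but uses coalescence. One step you should add explicitly: the weak limit $\mu_{\infty}$ is supported on the closure of the nested sets $\{\tau:\cap_{k=0}^n C_k^{(k_0),\tau}\}$, not on their intersection itself, so you need the almost-sure identity between the intersection of the closures and the closure-free intersection, which follows from the switching-time argument used to establish \eqref{thmp5}; the paper cites this explicitly, and without it the conclusion $\dim_H\tilde{T}^{\pm}(K)\geq\alpha$ does not quite follow from finiteness of the energy.
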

As an immediate consequence of this we have $\dim_H(\tilde{T}^{\pm}(K)) \to 1$ as $K \to \infty$.  Proposition \ref{ldim} is established by a modification of the arguments used in Proposition 5.3 from \cite{OP}.  We will drop the $K$ dependence from the notation for the moment.  First we define a family of random measures, $\sigma_{n,m}$, that play the role of the $\sigma_n$ from their proof.  As above, we take $C_k:=C_k^0$.  Given a Borel set $E$ in $[0,1]$, $n \geq m$, we define:
\begin{align*}
\sigma_{n,m}(E) := \int_E \prod_{k=m}^n \dfrac{\1_{C_k^{\tau}}}{\Prob(C_k)} d\tau,
\end{align*}
and notice that $\sigma_{n,m}$ is supported on $\bar{E}_{n,m}$, the closure of:
\begin{align}
E_{n,m}:= \{ \tau \in [0,1] : \bigcap_{k=m}^n C_k^{\tau}\; \text{  occurs}\}.
\end{align}
Now, reasoning as in \cite{OP}, we would like to show $\Prob(\sigma_{n,m}([0,1])>1/2)>c$ (for some $c$ independent of $n$) and bound the expectation of the $\alpha$-energy of $\sigma_{n,m}$, defined to be:
\begin{align*}
\int_0^1 \int_0^1 \frac{1}{|\tau-\tau'|^{\alpha}} d\sigma_{n,m}(\tau) d\sigma_{n,m}(\tau')
\end{align*}
Similarly to \eqref{thmp1}-\eqref{thmp3} above, we have for all $n\geq m$:
\begin{align*}
\E [\sigma_{n,m}([0,1])^2] =& \int_0^1 \int_0^1 \prod_{k=m}^n \dfrac{\Prob(C^{\tau}_k \cap C^{\tau'}_k)}{\Prob(C_k)^2} d\tau d\tau' \\
\leq & c \int_0^1 \int_0^1 \dfrac{1}{|\tau-\tau|^{b_m}} d\tau d\tau',
\end{align*}
with:
\begin{align*}
b_m = \log [ \sup_{k \geq m} (\Prob ( C_k)^{-1})]/ \log \tilde{\gamma}.
\end{align*}

For $b_m<1$, an application of the Cauchy-Schwartz inequality as in \eqref{thmp1}-\eqref{thmp3} gives:
\begin{align*}
\Prob\left[\sigma_{n,m}([0,1])>1/2\right] \geq& \dfrac{\left( \E \left[\sigma_{n,m}([0,1]) \1_{\sigma_{n,m}([0,1])>1/2} \right]\right)^2}{\E \left[\sigma_{n,m}([0,1])^2\right]} \\
\geq& \dfrac{\left( \E \left[\sigma_{n,m}([0,1]) \right] - 1/2 \right)^2}{\E \left[\sigma_{n,m}([0,1])^2\right]} \\
=& \dfrac{\left( 1 - 1/2 \right)^2}{\E \left[\sigma_{n,m}([0,1])^2\right]} \\
\geq& c' \left( \int_0^1 \int_0^1 \dfrac{1}{|\tau-\tau|^{b_m}} d\tau d\tau' \right)^{-1}.
\end{align*}
So $\Prob(\sigma_{n,m}([0,1])>1/2)>c''$, where $c''$ doesn't depend on $n$.

We can bound the expectation of the $\alpha$-energy of $\sigma_{n,m}$ by:
\begin{align*}
\E  \left[ \int_0^1 \int_0^1 \frac{1}{|\tau-\tau'|^{\alpha}} d\sigma_{n,m}(\tau) d\sigma_{n,m}(\tau') \right] =& \int_0^1 \int_0^1 \frac{1}{|\tau-\tau'|^{\alpha}}\prod_{k=m}^n \dfrac{\Prob(C^{\tau}_k \cap C^{\tau'}_k)}{\Prob(C_k)^2} d\tau d\tau' \\
\leq& \int_0^1 \int_0^1 \frac{1}{|\tau-\tau'|^{\alpha+b_m}} d\tau d\tau'
\end{align*}
We assume that $K$ is large enough to make the right-hand side of \eqref{dimbnd} positive (otherwise there is nothing to prove).  Then $b_{\infty} < 1$ and by the diffusive scaling of the events $C_k$ we have $b_m \rightarrow b_{\infty} <1$ as $m\rightarrow \infty$.  So given any $\alpha < 1- b_{\infty}$, there exists $m$ large enough such that $\alpha + b_m <1$.  This gives a uniform (in $n$) bound on the expectation of the $\alpha$-energy of $\sigma_{n,m}$.  Arguing as in Proposition 5.3 of \cite{OP}, we can then use the extension of Frostman's lemma from \cite{OP3} to conclude that:
\begin{align}
\dim_H ( \bigcap_{n\geq m} \bar{E}_{n,m}) \geq \alpha\; \text{ with positive probability.}\label{dimbnd1}
\end{align}
Now, for our chosen $K$, $\bigcap_{n\geq m} E_{n,m} \subset \tilde{T}^{\pm}(K)$ for all $m$ (using \eqref{sigmaK} and the a.s. coalescence of all paths).  We've shown that given any $\alpha < 1- b_{\infty}$, \eqref{dimbnd1} holds for some sufficiently large $m$.  Also, $\bigcap_{n\geq m} E_{n,m}=\bigcap_{n\geq m} \bar{E}_{n,m}$ almost surely, by the same argument used to establish \eqref{thmp5}.  Combining these observations, we have:
\begin{align*}
\dim_H(\tilde{T}^{\pm}(K)) \geq  1-b_{\infty}(K),
\end{align*}
(almost surely by ergodicity in $\tau$ of the DyDW).  This proves Proposition \ref{ldim}.

\begin{rmk}
One may wish to consider ``asymmetrical" exceptional times.  That is, exceptional times where the $K$ of $\tilde{T}^{\pm}(K)$, $T^{\pm}(K)$, $T_j^{\pm}(K)$, etc. is replaced by two constants, $K_L$,  and $K_R$, giving different bounds on the left and right sides.  One can obtain an analogous lower bound for the dimension of these asymmetrical exceptional times using the ``skewed rectangle" construction described in Remark \ref{skew}.  
\end{rmk}

Now we look at upper bounds for the Hausdorff dimension of the sets of two-sided exceptional times.  This is a straightforward extension of the results in Section 5.2 of \cite{OP}.  Following \cite{OP}, we state the results for the asymmetrical case.   So we give an upper bound for $\dim_H( T^{-}_1(K_L)\cap T_1^+(K_R))$.  Recall:
\begin{align}
T^{-}_1(K_L)\cap T_1^+(K_R)= \{ \tau \in [0,\infty) : -1-K_L\sqrt{t} \leq S_0^{\tau}(t) \leq 1+K_R\sqrt{t} \;\; \text{for all $t$}\},
\end{align}
using the definitions given earlier in this section.

Proposition 5.5 from \cite{OP} gives the bound $\dim_H( T^{-}_1(K)) \leq 1- p(K)$, where $p(K)\in (0,1)$ is the solution to:
\begin{align}
f(p,K):= \dfrac{ \sin(\pi p/2) \Gamma(1+p/2)}{\pi} \sum_{n=1}^{\infty} \dfrac{(\sqrt{2}K)^n}{n!} \Gamma((n-p)/2)=1.
\end{align}
They also prove that $T^{-}_1(K_L)\cap T_1^+(K_R)$ is empty when $p(K_L)+p(K_R)>1$ (see \cite{OP}, Proposition 5.8).  The function $p(K)$ comes from \cite{OP18}, where it is shown that $p(K)$ is continuous and decreasing on $(0,\infty)$, tending to $0$ as $K$ goes $\infty$, tending to $1$ as $K$ goes to $0$.

The upper bound from \cite{OP} is established by partitioning $[0,1]$ into intervals of equal length, and estimating the number of these needed to cover $T^{-}_1(K)$.  An application of the FKG inequality, as in Proposition 5.8 of \cite{OP}, extends the bound to the two-sided case, giving:

\begin{prop}
\begin{align*}
\dim_H( T^{-}_1(K_L)\cap T_1^+(K_R)) \leq 1- p(K_L)-p(K_R),
\end{align*}
so:
\begin{align*}
\dim_H( T^{\pm}_1(K)) \leq 1- 2p(K).
\end{align*}
\end{prop}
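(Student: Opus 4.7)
The proof plan is to adapt the covering argument of Proposition 5.5 of \cite{OP} (which gave the one-sided bound $\dim_H(T_1^-(K)) \leq 1 - p(K)$) and combine it with an FKG-type argument as in Proposition 5.8 of \cite{OP}, which handles the two-sided interplay between the upper and lower barrier constraints. First I would partition $[0,1]$ into $\lceil \epsilon^{-1} \rceil$ half-open intervals $I_j$ of length $\epsilon$. To bound the $s$-dimensional Hausdorff measure of $(T_1^-(K_L) \cap T_1^+(K_R)) \cap [0,1]$, it suffices to control the expected number of such intervals meeting this set, weighted by $\epsilon^s$. So the problem reduces to the interval estimate
\begin{equation*}
\Prob\bigl(I_j \cap T_1^-(K_L) \cap T_1^+(K_R) \neq \emptyset \bigr) \leq C\,\epsilon^{p(K_L)+p(K_R)},
\end{equation*}
uniformly in $j$, with $C$ independent of $\epsilon$.

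For this interval estimate I would follow \cite{OP}, Proposition 5.5, which uses two ingredients: (i) the truncated one-sided persistence estimate for a single-$\tau$ simple random walk, $\Prob(S(t) \leq 1 + K\sqrt{t} \text{ for all } t \leq T) \lesssim T^{-p(K)/2}$ from \cite{OP18}, and (ii) a pivotal/switching argument in the dynamical parameter that converts this into a probability bound over an $\epsilon$-interval. For the two-sided problem I would replace (i) with the corresponding two-barrier persistence estimate $\Prob(-1 - K_L\sqrt{t} \leq S(t) \leq 1 + K_R\sqrt{t} \text{ for all } t \leq T) \lesssim T^{-(p(K_L) + p(K_R))/2}$. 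This static bound is obtained from the two one-sided bounds by the Harris--FKG inequality: viewed as functions of the underlying $\pm 1$-valued Bernoulli arrow variables, the upper-barrier event is decreasing and the lower-barrier event is increasing, so their intersection probability is bounded by the product of the marginals. Feeding the two-sided static estimate into the same pivotal argument yields the desired exponent $p(K_L) + p(K_R)$.

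To transport the FKG step into the dynamical setting, I would condition on the Poisson clock ring times and apply FKG fiberwise to the (conditionally independent) arrow values assigned at each ring. Conditional on the ring times, the events $\{\exists \tau \in I_j : S_0^\tau(t) \leq 1 + K_R\sqrt{t} \;\forall t\}$ and $\{\exists \tau \in I_j : S_0^\tau(t) \geq -1 - K_L\sqrt{t} \;\forall t\}$ remain, respectively, decreasing and increasing in the arrow values. This is precisely the FKG setup used in Proposition 5.8 of \cite{OP} to show emptiness of the two-sided set whenever $p(K_L) + p(K_R) > 1$; the present argument uses it at the finer $\epsilon$-interval scale.

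Combining the pieces, the expected number of $\epsilon$-intervals meeting the set is at most $C\,\epsilon^{p(K_L)+p(K_R)-1}$, so the expected $s$-dimensional Hausdorff sum along the partition is at most $C\,\epsilon^{p(K_L)+p(K_R)+s-1}$, which tends to zero along a subsequence $\epsilon_n \to 0$ for any $s > 1 - p(K_L) - p(K_R)$. By Fatou together with the standard passage from expected Hausdorff sums to almost-sure vanishing of the $s$-Hausdorff measure, and ergodicity in $\tau$, we conclude
\begin{equation*}
\dim_H\bigl(T_1^-(K_L) \cap T_1^+(K_R)\bigr) \leq 1 - p(K_L) - p(K_R)
\end{equation*}
almost surely, and the symmetric case $K_L = K_R = K$ gives $\dim_H(T_1^{\pm}(K)) \leq 1 - 2p(K)$. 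The main obstacle is carrying out the pivotal/switching argument of \cite{OP}, Proposition 5.5, in the two-barrier corridor setting and verifying that the conditional FKG step at the level of ``exists $\tau \in I_j$'' events really yields the product of the two one-sided interval probabilities (rather than merely the product of the static marginals); this is where the interplay between the quantifier in $\tau$ and the monotonicity of the two barrier events has to be handled with care.
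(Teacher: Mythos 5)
Your overall blueprint is the same as the paper's: cover $[0,1]$ by $\epsilon$-intervals, bound the probability that an interval meets the two-sided set by the product of the two one-sided interval probabilities (which \cite{OP}, Proposition 5.5, bounds by $C\epsilon^{p(K)}$ each), and conclude $\dim_H \leq 1-p(K_L)-p(K_R)$; the paper likewise invokes the one-sided covering estimate plus ``FKG as in Proposition 5.8 of \cite{OP}.'' The gap is in the mechanism you propose for the FKG step. Conditioning on the Poisson ring times and applying FKG fiberwise to the conditionally i.i.d.\ arrow values only gives
$\Prob(A\cap B)\leq \E\bigl[\Prob(A\mid \text{rings})\,\Prob(B\mid \text{rings})\bigr]$,
where $A=\{\exists\tau\in I_j: S_0^\tau(t)\leq 1+K_R\sqrt{t}\ \forall t\}$ and $B$ is the lower-barrier analogue, and this cannot be averaged into $\Prob(A)\Prob(B)$ without further input: in the symmetric case $K_L=K_R$, left--right reflection shows $\Prob(B\mid\text{rings})=\Prob(A\mid\text{rings})$ pointwise in the ring configuration, so the right-hand side equals the second moment $\E[\Prob(A\mid\text{rings})^2]\geq \Prob(A)^2$, i.e.\ the chain of inequalities points the wrong way and the step you flagged as ``to be handled with care'' genuinely does not close as stated.

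The fix, and what the citation of Proposition 5.8 of \cite{OP} amounts to, is to apply FKG unconditionally to the full space--time arrow field. First use the containment $\{\exists\tau\in I_j: \text{both barriers}\}\subseteq A\cap B$. Then note that, with respect to the coordinatewise partial order on the processes $\{\xi^{\tau}_{(x,t)}:\tau\}$, the event $A$ is decreasing and $B$ is increasing (raising arrows pushes every path $S_0^\tau$ weakly to the right, uniformly in $\tau$, and unions over $\tau\in I_j$ preserve monotonicity --- this part of your proposal is right). Each site's arrow process is a stationary attractive two-state Markov process, hence its law on trajectory space is positively associated (Harris), and independence across sites preserves positive association; therefore $\Prob(A\cap B)\leq \Prob(A)\Prob(B)$ holds directly, without conditioning, and the one-sided bounds of \cite{OP} give $C\epsilon^{p(K_L)+p(K_R)}$ per interval. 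With that substitution your first-moment/Hausdorff-sum computation is exactly the paper's argument, and your alternative route (FKG at the static level from \cite{OP18} plus redoing the pivotal/switching argument for the two-barrier corridor) is unnecessary extra work that the paper avoids by reusing the one-sided interval estimate as a black box.
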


Note that, as with the lower bound, continuity of the bound combined with our previous results gives an identical bound for $\dim_H (T^{\pm}(K))$, $\dim_H (T_j^{\pm}(K))$, $\dim_H(\tilde{T}^{\pm}(K))$ and their asymmetrical analogues.
\end{subsection}

\begin{subsection}{Superdiffusive Exceptional Times}\label{supdim}

\quad In this section we will derive a lower bound for the Hausdorff dimension of the superdiffusive exceptional times.  We consider the rectangle events $\hat{A}_k^{\tau}$ from Section \ref{st2}.  The proof relies on the same techniques as in Proposition 5.3 from \cite{OP} and Proposition \ref{ldim} above.  As in Section \ref{st2} of this paper, the proof is complicated by the fact that $\Prob(\hat{A}_k) \to 0$.  We will give a bound for the set of $\tau$ such that $\hat{A}_k^{\tau}$ occurs for all $k \geq 0$.  This set is a subset of the set of the superdiffusive exceptional times, so a lower bound on its dimension gives the bound we need.  To obtain an upper bound on the dimension of the superdiffusive exceptional times we would instead need to consider a (possibly) larger set.  Attempts in this direction have not been succesful, so we will only give a lower bound.

Let $\hat{T}^+(C),\hat{T}^-(C)$ denote the sets of $C$-superdiffusive times, i.e.:
\begin{align*}
\hat{T}^+(C)=\{ \tau \in[0,\infty) &:\limsup_{t \to \infty} S_0^{\tau}(t)/\sqrt{t\log(t)} \geq C \}\\
\hat{T}^-(C)=\{ \tau \in[0,\infty) &:\liminf_{t \to \infty} S_0^{\tau}(t)/\sqrt{t\log(t)} \leq -C \}.
\end{align*}
These two sets will have the same Hausdorff dimension due to symmetry.  So we will focus on $\hat{T}^+(C)$.  Taking $a'$ to be the value given by Lemma \ref{hdec} we have the following proposition:

\begin{prop}\label{supdimprop}
For $\gamma,C$ such that $a'(1-\gamma^{-3/2}) - 2C^2 > 0$ we have:
\begin{align}
\dim_H(\hat{T}^{+}(C)) \geq 1 - \frac{2 \gamma^2 C^2}{(\gamma-1)}. \label{supdimpropbnd}
\end{align}
\end{prop}

As an immediate consequence of Proposition \ref{supdimprop} we see that $\dim_H(\hat{T}^{+}(C)) \to 1$ as $C \to 0$.  The remainder of this section will be devoted to the proof of Proposition \ref{supdimprop}.  Similar to Proposition \ref{ldim} above, we consider the measures:
\begin{align*}
\hat{\sigma}_{n,m}(E) := \int_E \prod_{k=m}^n \dfrac{\1_{\hat{A}_k^{\tau}}}{\Prob(\hat{A}_k)} d\tau,
\end{align*}
which are supported on $\bar{\hat{E}}_{n,m}$, the closure of:
\begin{align}
\hat{E}_{n,m}:= \{ \tau \in [0,1] : \bigcap_{k=m}^n \hat{A}_k^{\tau}\; \text{  occurs}\}.
\end{align}
For the sake of simplicity we will only consider the case $m=0$ and let $\hat{\sigma}_{n}=\hat{\sigma}_{n,0}$, $\hat{E}_{n}=\hat{E}_{n,0}$.  For $n\geq 0$ we want to bound $\E [\hat{\sigma}_{n}([0,1])^2]$ and the expectation of the $\alpha$-energy of $\hat{\sigma}_{n}$:
\begin{align}\label{supbnd}
\E  \left[ \int_0^1 \int_0^1 \frac{1}{|\tau-\tau'|^{\alpha}} d\hat{\sigma}_{n}(\tau) d\hat{\sigma}_{n}(\tau') \right] =& \int_0^1 \int_0^1 \frac{1}{|\tau-\tau'|^{\alpha}} \prod_{k=0}^n  \dfrac{\Prob(\hat{A}^{\tau}_k \cap \hat{A}^{\tau'}_k)}{\Prob(\hat{A}_k)^2} d\tau d\tau'.
\end{align}
$\E [\hat{\sigma}_{n}([0,1])^2]$ corresponds to $\alpha=0$ in \eqref{supbnd}, so if we can obtain an integrable bound for \eqref{supbnd} with $\alpha>0$ we will have $\dim_H(\hat{T}(C))\geq\alpha$.  This follows from Cauchy-Schwarz and Frostman's lemma, as in Proposition 5.3 of \cite{OP} and Proposition \ref{ldim} above.  We will use Lemma \ref{hdec} from Section \ref{st2} to bound the product on the right side of \eqref{supbnd}.  Lemma \ref{hdec} gives $c',a'\in (0,\infty)$ independent of $ k, \tau$ and $\tau'$ such that:
 \begin{center}
$\dfrac{\Prob(\hat{A}_k^{\tau} \cap \hat{A}_k^{\tau'})}{\Prob(\hat{A}_k)^2} \leq  1 + \dfrac{c'}{\Prob(\hat{A}_k)^2}\left(\dfrac{1}{\gamma^{\gamma^k} |\tau-\tau'|}\right)^{a'}\!\!$. 
 \end{center}
Let $N_0= \lfloor \log_{\gamma}(2\log_{\gamma}(1/|\tau-\tau'|)) \rfloor + 1$, so that $\gamma^{\gamma^{N_0}/2}|\tau-\tau'| \geq 1$.  We will split the product at $N_0$ to obtain our bound.  First consider $k>N_0$:
\begin{align}
\prod_{k=N_0+1}^n \dfrac{\Prob(\hat{A}^{\tau}_k \cap \hat{A}^{\tau'}_k)}{\Prob(\hat{A}_k)^2} \leq &
\prod_{k=N_0+1}^{\infty} 1 + \dfrac{c'}{\Prob(\hat{A}_k)^2}\left(\dfrac{1}{\gamma^{\gamma^k} |\tau-\tau'|}\right)^{a'} \nonumber \\
 = & \prod_{k=N_0+1}^{\infty} 1 + \dfrac{c'}{\Prob(\hat{A}_k)^2}\left(\dfrac{1}{\gamma^{\gamma^k-\gamma^{N_0/2}}( \gamma^{\gamma^{N_0/2}}|\tau-\tau'|)}\right)^{a'} \nonumber \\
\leq & \prod_{k=N_0+1}^{\infty} 1 + \dfrac{c'}{\Prob(\hat{A}_k)^2}\left(\dfrac{1}{\gamma^{\gamma^k-\gamma^{N_0/2}}}\right)^{a'} \nonumber \\
\leq & \prod_{k=N_0+1}^{\infty} 1 + \dfrac{c'}{\Prob(\hat{A}_k)^2}\left(\dfrac{1}{\gamma^{\gamma^k(1-\gamma^{-3/2})}}\right)^{a'} \nonumber \\
\leq & \ \  \prod_{k=1}^{\infty} \ \ 1 + \dfrac{c'}{\Prob(\hat{A}_k)^2}\left(\dfrac{1}{\gamma^{\gamma^k(1-\gamma^{-3/2})}}\right)^{a'} \label{supdimeq}
\end{align}
In the second to last step we used $N_0\geq 1$, $k\geq N_0 +1$.  In Section \ref{st2} (see \eqref{supprobbnd}) we saw that:
\begin{align*}
\Prob(\hat{A}_k) \geq &  K'' (\gamma^{\gamma^k})^{-(1+\epsilon_k'')^2C^2},
\end{align*}
where $\epsilon_k'' \to 0$.  This bound was proven for $k$ sufficiently large, but we can make it hold for all $k$ by decreasing $K''$.  So provided that $a'(1-\gamma^{-3/2}) - 2C^2 > 0$ \eqref{supdimeq} gives:
\begin{align}
\prod_{k=N_0+1}^n \dfrac{\Prob(\hat{A}^{\tau}_k \cap \hat{A}^{\tau'}_k)}{\Prob(\hat{A}_k)^2} \leq & K(C). \label{supdimbnda}
\end{align}

Now we consider the $k\leq N_0$ terms.  Given any fixed $\epsilon''>0$ we can make $\epsilon_k'' < \epsilon''$ for all $k$ by decreasing $K''$,  since $\epsilon_k'' \to 0$.  Similarly to \eqref{Pahat1}-\eqref{supprobbnd} above we will absorb lower order terms into $K''$ and $\epsilon ''$.  To keep the notation simple we will continue to use $K'',\epsilon''$ to denote these updated values.  So for $n \leq N_0$, we have:
\begin{align*}
\prod_{k=0}^n \dfrac{\Prob(\hat{A}^{\tau}_k \cap \hat{A}^{\tau'}_k)}{\Prob(\hat{A}_k)^2} \leq & \prod_{k=0}^{N_0} \dfrac{1}{\Prob(\hat{A}_k)} \\
\leq & \prod_{k=0}^{N_0} \dfrac{1}{K'' (\gamma^{\gamma^k})^{-(1+\epsilon_k'')^2C^2}} \\
\leq & \dfrac{1}{{(K'')}^{N_0+1}\gamma^{-(1+\epsilon'')^2C^2\sum_{k=0}^{N_0} \gamma^k}} \\
\leq & \dfrac{(\gamma^{\gamma^{N_0+1}})^{(1+\epsilon'')^2C^2/(\gamma-1)}}{{(K'')}^{N_0+1}} \\
\leq & K'' \left( \dfrac{1}{|\tau-\tau'|} \right)^{\frac{2 \gamma^2 (1+\epsilon'')^2C^2}{(\gamma-1)}}.
\end{align*}
Combined with \eqref{supdimbnda} this gives the bound needed in \eqref{supbnd} which completes the proof.

One could derive a similar lower bound for the dimension of the two-sided superdiffusive times, $\hat{T}^{\pm}(C) = \hat{T}^+(C) \cap \hat{T}^-(C)$, using the same techniques combined with the ideas of Section \ref{2sidesup}.  We will not present a proof of such a result.

\end{subsection}

\vspace*{1in}
\noindent \textbf{Acknowledgements:}  The research reported in this paper was supported in part by NSF grants OISE-0730136, DMS-1007524 and DMS-1207678.  I would like to thank Chuck Newman, my advisor, for introducing me to the problem and our many helpful discussions.  I would also like to thank Behzad Mehrdad and Arjun Krishnan for a helpful discussion related to the proof of Theorem \ref{thm2}.
\end{section}

\end{document}